\documentclass[10pt]{amsart}

\usepackage{a4wide}
\usepackage{graphicx} 
\usepackage{geometry}
\usepackage[utf8]{inputenc}
\usepackage{amssymb,amsmath,amscd,amsfonts,amsthm,bbm,mathrsfs,enumerate}
\usepackage{booktabs}
\usepackage[colorlinks=true, linkcolor=blue, citecolor=blue]{hyperref}
\usepackage{mathabx}
\usepackage{graphicx}
\usepackage{color}
\usepackage{accents}
\usepackage{subcaption}
\usepackage{enumitem}
\usepackage{amsaddr}
\usepackage{comment}

\DeclareUnicodeCharacter{00A0}{~}

\newcommand{\bs}{\boldsymbol}

\numberwithin{equation}{section}

\newcommand{\C}{\mathbb{C}}

\newcommand{\Prob}{\mathbb{P}}

\newcommand{\rank}{\mathop{\operator@font rank}}

\newcommand{\vertiii}[1]{{\left\vert\kern-0.25ex\left\vert\kern-0.25ex\left\vert #1
    \right\vert\kern-0.25ex\right\vert\kern-0.25ex\right\vert}}
\makeatother

\newcommand{\N}{\mathbb{N}} 
\newcommand{\E}{\mathbb{E}}

\newtheorem{thm}{Theorem}[section]
\newtheorem{lem}[thm]{Lemma}

\newtheorem{cor}[thm]{Corollary}
\newtheorem{ass}{Assumption}
\theoremstyle{definition}
\newtheorem{defn}[thm]{Definition}

\theoremstyle{remark}
\newtheorem{rem}[thm]{Remark}

\title{A note on outlier eigenvectors for sparse non-Hermitian perturbations}

\author{ Miltiadis Galanis$^{1,3}$, Michail Louvaris$^{2}$}
\address{$^{1}$ Department of Informatics and Telecommunications,
National and Kapodistrian University of Athens, Zografou 161 22, Athens, Greece.
\quad \\ Email: \textit{mgalan@di.uoa.gr} }
\address{$^{2}$ Department of Mathematics, Yale University, New Haven, USA. \\
\quad Email: \textit{michail.louvaris@yale.edu}}
\address{$^{3}$ Quantum Neural Technologies SA, Athens, Greece. \\
\quad Email: \textit{mag@quantumtech.ai}}

\begin{document}
\maketitle

\begin{abstract}
We consider a sparse i.i.d.\ non-Hermitian random matrix model $X_n$ (with sparsity parameter $K_n$)
and a deterministic finite-rank perturbation $E_n$. Assuming biorthogonality for $E_n$ and a growth condition on $K_n$, we outline a finite-rank resolvent
reduction leading to asymptotics for the overlap between an outlier eigenvector of $Y_n:=X_n+E_n$
and the corresponding spike eigenspace. In particular, for an outlier spike $\mu$ with $|\mu|>1$,
the squared projection of the associated (right) eigenvector onto the spike eigenspace converges
in probability to $1-|\mu|^{-2}$. Our result generalizes Theorem 1.6 of \cite{hachem2026extreme} to general finite rank case solving Open Problem 5.
\end{abstract}


\section{Introduction}

The study of eigenvalue outliers in random matrix theory has a long and
well-established history. In the symmetric and Hermitian settings,
additive finite-rank deformations often lead to predictable and
well-understood spectral deviations. A landmark result of Baik, Ben Arous,
and P\'ech\'e (BBP) showed that, for sample covariance matrices with Gaussian
entries, finite-rank deformations produce eigenvalues that detach from the bulk
once a critical threshold is exceeded; see \cite{baik2005phase}. This phase
transition phenomenon was subsequently extended to other models in
\cite{baik2006eigenvalues,paul2007asymptotics,capitaine2009largest,benaych2011eigenvalues}.
In addition to identifying the outlier eigenvalues, these \cite{benaych2011eigenvalues} also
characterized the associated eigenvector overlaps in the Hermitian setting.

In the non-Hermitian i.i.d.\ case, assuming finite fourth moments, the
location of eigenvalue outliers for additive finite-rank deformations was
established in \cite{tao2013outliers,bordenave2016outlier}. However,
a precise description of the associated eigenvectors remained largely open.

Recently in \cite{hachem2026extreme}, the eigenvalues of finite-rank additive perturbations of
\emph{sparse} non-Hermitian random matrices were characterized across all
sparsity regimes and under minimal moment assumptions (see Theorem 1.2  of
\cite{hachem2026extreme}). This was achieved using the convergence framework developed in
\cite{bordenave2022convergence}. Related applications of this framework
appear in \cite{cos-lam-yiz-24,coste2023sparse,hachem2026spectral}.

Under a specific sparsity regime and assuming subgaussian entries, the
asymptotic behavior of the eigenvector projection was determined in the
rank-one case (Theorem 1.6 of \cite{hachem2026extreme}), using universality results from
\cite{brailovskaya2024universality}. In that setting, the squared overlap
between the outlier eigenvector and the spike direction converges to
$1-|\mu|^{-2}$ for spikes $\mu$ outside the unit disk.

\medskip

The purpose of the present note is to remove the rank-one restriction and
to establish the corresponding eigenvector behavior for general finite-rank
deterministic perturbations. More precisely, we consider sparse non-Hermitian
random matrices $X_n$ and deterministic perturbations $E_n$ of arbitrary fixed
rank. We quantify the alignment between an outlier eigenvector of
\[
Y_n = X_n + E_n
\]
and the corresponding spike eigenspace of $E_n$. We make the assumption that $E_n$ admits a biorthogonal representation.

The extension from rank one to finite rank is not merely notational.
In the non-Hermitian setting, multiplicities and interactions between
distinct spike blocks introduce genuine structural difficulties.
In particular, one must control the kernel of a finite-dimensional
matrix-valued function derived from the resolvent and localize the
associated kernel vector onto the correct spike block.
The argument therefore requires a systematic finite-rank resolvent
reduction and a quantitative kernel localization mechanism.

\medskip

Our approach is entirely resolvent-based. We first establish a
finite-rank kernel--eigenspace bijection, which expresses any outlier
eigenvector in terms of the resolvent of $X_n$ and a low-dimensional
kernel vector. The main task is then to show that this kernel vector
concentrates on the appropriate spike block and that the compressed
resolvents converge to their deterministic limits. Combining these
ingredients yields the asymptotic overlap formula
\[
\langle \tilde u_{\ell,n}, F_{\ell,n} \rangle^2
\xrightarrow{\mathbb{P}}
1 - \frac{1}{|\mu|^2},
\]
for spikes $\mu$ with $|\mu|>1$. Notice that the limit is the same as in the Hermitian case; see \cite{benaych2011eigenvalues}.

\medskip

Additively deformed non-Hermitian matrices arise naturally in several
applied fields. In neural network theory, the matrix $Y_n$ models random
interactions between neurons \cite{som-cri-som-88,wai-tou-13}.
In theoretical ecology, sparse interaction matrices describe the
dynamics of ecosystems \cite{bunin2017ecological,akj-etal-24}.
Understanding the stability and structure of outlier modes is therefore
relevant in these contexts.

\medskip

The paper is organized in such a way that the linear-algebraic reduction is explicit
and reusable. After establishing the finite-rank reduction, we combine
resolvent estimates and universality results to control the relevant
bilinear forms and complete the proof of the main theorem.

\section{Results}
\subsection{Notation}
Throughout, $\langle\cdot,\cdot\rangle$ denotes the standard Hermitian inner product on $\C^n$ and
$\|\cdot\|$ the matrix operator norm or the vector Euclidean norm. Moreover denote by
$\sigma(M)=\{\lambda_1(M), \ldots, \lambda_m(M)\}$ the spectrum of an $m\times m$ matrix $M$.
Furthermore for a sequence of random variables $J_n$ and a random variable $J$ we write
\[
J_n\xrightarrow{\mathbb{P}}J
\]
to denote convergence in probability, and we write $J_n=o_{\mathbb{P}}(1)$ to denote
$J_n\xrightarrow{\mathbb{P}}0$.  For
$m\in \mathbb{N}$, set $[m] = \emptyset$ if $m = 0$ and $[m] = \{1,\ldots, m
\}$ otherwise. 

Lastly recall the definition of Hausdorff distance between two sets. Let $z\in\C$ and
$A,B\subset\C$. Define $d(z,A)=\inf_{\xi\in A}|z-\xi|$. The Hausdorff distance between $A$ and $B$,
denoted by $d_{\bs H}(A,B)$, is
\[
d_{\bs H}(A,B)=\max\left\{\sup_{z\in A} d(z,B)\,;\ \sup_{z\in B} d(z,A)\right\}.
\]

\subsection{Model}
Let $\chi$ be a complex-valued random variable such that $\E(\chi)=0$ and $\E(|\chi|^2)=1$.
For each integer $n\ge 1$, let $A_n=(\{A_n\}_{ij})_{i,j=1}^n\in\C^{n\times n}$ be a random matrix with
i.i.d.\ entries distributed as $\chi$.
Let $(K_n)$ be a sequence of positive integers with $K_n\le n$. Let $(B_n)$ be a sequence of
$n\times n$ matrices with i.i.d.\ Bernoulli entries such that
\[
\Prob\{\{B_{n}\}_{1,1}=1\}=K_n/n,
\]
and assume $B_n$ and $A_n$ are independent. Define $X_n=(\{X_n\}_{ij})_{i,j=1}^n$ by
\begin{equation}\label{sparser_matrices}
\{X_n\}_{ij}=\frac{1}{\sqrt{K_n}}\,\{B_n\}_{ij}\,\{A_n\}_{ij}.
\end{equation}
Then $\E \{X_n\}_{11}=0$ and $\E|\{X_n\}_{11}|^2=1/n$. 
The parameter $K_n$ is referred to as the sparsity parameter of $X_n$.

Let $r>0$ be fixed. Consider deterministic vectors $u^{1,n},\dots,u^{r,n},v^{1,n},\dots,v^{r,n}\in\C^n$
and define the deterministic finite-rank perturbation
\[
E_n=\sum_{t=1}^r u^{t,n}(v^{t,n})^\star.
\]
Define $$Y_n:=X_n+E_n.$$

We make the following assumption for $E_n.$
\begin{ass}\label{ass:E-bounded}
There exists an absolute constant $C>0$ such that
\[
\sum_{t=1}^r \|u^{t,n}\| + \|v^{t,n}\|\le C;
\]
\end{ass}

Recently the following result was proven in \cite{hachem2026extreme} for the eigenvalues of $Y_n$. 
\begin{thm}\label{thm_eigenvalues}[Theorem 1.2 of \cite{hachem2026extreme}]
Assume that 
\[
K_n \xrightarrow{n\to\infty} \infty
\]
and that Assumption \ref{ass:E-bounded} holds true.
Define
\[
\sigma^+(E_n)=\sigma(E_n)\cap\{z\in\C:\ |z|>1\},
\qquad
\sigma^+_\varepsilon(Y_n)=\sigma(Y_n)\cap\{z\in\C:\ |z|\ge 1+\epsilon\},
\]
and let $m_n=|\sigma^+(E_n)|$. Then
\[
\Prob\Big\{|\sigma^+_\varepsilon(Y_n)|\neq m_n\Big\}\xrightarrow[n\to\infty]{}0.
\]
For each sequence $(n')$ with $n'\to\infty$ and $m_{n'}>0$ for all $n'$,
\[
d_{\bs H}\big(\sigma^+_\varepsilon(Y^{n'}),\sigma^+(E^{n'})\big)\xrightarrow[n\to\infty]{\Prob} 0,
\]
with the convention $d_{\bs H}(\emptyset,\sigma^+(E^{n'}))=\infty$.
\end{thm}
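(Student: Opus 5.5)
The plan is to reduce the outlier question to a finite-dimensional determinant equation and then use convergence of resolvent-type quantities of $X_n$ outside the unit disk. Write $E_n = U_n V_n^\star$, where $U_n=[u^{1,n},\dots,u^{r,n}]$ and $V_n=[v^{1,n},\dots,v^{r,n}]$ are $n\times r$ matrices whose operator norms are bounded by Assumption \ref{ass:E-bounded}.

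\textbf{Step 1 (reduction).} For $z\notin\sigma(X_n)$, Sylvester's identity gives
\[
\det(z-Y_n)=\det(z-X_n)\,\det\big(I_r - V_n^\star (z-X_n)^{-1}U_n\big),
\]
and similarly $\det(z-E_n)=z^{n-r}\det(I_r - V_n^\star U_n/z)$. Consequently, on $\{|z|\ge 1+\varepsilon\}$, the outliers of $Y_n$ are exactly the zeros of $f_n(z)=\det(I_r - V_n^\star R_n(z)U_n)$ with $R_n(z)=(z-X_n)^{-1}$. The nonzero spectrum of $E_n$ in the same region is exactly the zero set of $g_n(z)=\det(I_r - V_n^\star U_n/z)$.

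\textbf{Step 2 (spectral radius and resolvent expansion).} I will first show that $\rho(X_n)\le 1+o_{\Prob}(1)$ when $K_n\to\infty$; more precisely, I need that for each fixed $k\ge 1$,
\[
\max_{s,t}\big|(v^{s,n})^\star X_n^k u^{t,n}\big|\xrightarrow{\Prob}0,
\qquad
\|X_n^k\|\le C_k \ \text{with high probability}.
\]
This will come from moment computations of $\E|\langle v,X_n^k u\rangle|^2$ and $\E\operatorname{tr}(X_n^kX_n^{k\star})^m$. These are path-counting arguments, and only the second moment of $\chi$ is used, after a truncation that is compatible with the sparse scaling. This is the Bordenave--Capitaine / Bordenave--Coste--Nadakuditi type framework. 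With it, expanding
\[
R_n(z)=\sum_{k\ge0} X_n^k z^{-k-1}
\]
(made rigorous by the bound $\|X_n^k\|\le (1+\delta)^k$ for $k$ large, uniformly for $|z|\ge 1+\varepsilon$) gives
\[
\sup_{|z|\ge1+\varepsilon}\big\|V_n^\star R_n(z)U_n - V_n^\star U_n/z\big\|\xrightarrow{\Prob}0.
\]

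\textbf{Step 3 (Rouché/Hurwitz).} The functions $f_n,g_n$ are analytic on $\{|z|>1+\varepsilon/2\}\cup\{\infty\}$ and uniformly bounded there, and $\sup|f_n-g_n|\to0$ in probability. The zeros of $g_n$ are the eigenvalues of the bounded $r\times r$ matrix $V_n^\star U_n$. I will argue along subsequences: by compactness, along a further subsequence $V_n^\star U_n$ converges, so $g_n$ converges to some limit $g$. Applying Rouché on small circles around the zeros of $g$, and on the boundary circle $|z|=1+\varepsilon$, matches the zero counts, which gives both the count statement and the Hausdorff convergence. If $g_n$ has a zero close to the circle $|z|=1+\varepsilon$, I will choose $\varepsilon'$ slightly perturbed to avoid it, which is harmless.

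\textbf{Main obstacle.} The main obstacle is Step 2: proving $\rho(X_n)\to1$, and the vanishing of the bilinear forms, under only $K_n\to\infty$ and finite variance. This requires a careful truncation of the entries together with a combinatorial (non-backtracking-style) moment bound, and that is the step where the real work lies.
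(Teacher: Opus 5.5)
The paper does not prove this statement; it quotes it from \cite{hachem2026extreme}, where it follows from the characteristic-polynomial method of \cite{bordenave2022convergence}. Taken on its own terms, your proposal has a genuine gap in Step 2. The difficulty is not only that the step is hard: the estimates you plan to prove are false under the stated hypotheses, which are just $\E|\chi|^2=1$ and $K_n\to\infty$ at an arbitrary rate.

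\emph{Heavy tails.} With only a second moment, $\max_{i,j}|\{X_n\}_{ij}|$ need not be tight. For instance, if $\Prob(|\chi|>t)\asymp t^{-2}\log^{-2}t$, it is of order $\sqrt n/\log n$ in the dense case. If this entry $L_n$ sits at position $(i,j)$, then $\|X_n\|\ge L_n$. Moreover, row $i$ of $X_n^k$ is $L_n\,e_j^*X_n^{k-1}$ plus a typically $O(1)$ remainder, so $\|X_n^k\|\gtrsim L_n\to\infty$ for every fixed $k$.

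\emph{Sparsity.} Independently, even for bounded $\chi$, if $K_n=o(\log n)$ some row of $B_n$ has $d_n\gg K_n$ nonzero entries. Then $\|X_n\|\ge\max_i\|X_n^*e_i\|\approx\sqrt{d_n/K_n}\to\infty$, and similarly for $X_n^k$.

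So both $\|X_n^k\|\le C_k$ and $\|X_n^k\|\le(1+\delta)^k$ for $k\ge k_0$ fail with probability tending to one. The Neumann series then gives no uniform control of $V_n^*R_n(z)U_n$ on $\{|z|\ge1+\varepsilon\}$, and the uniform boundedness of $f_n$ used in Step 3 is unsupported. Bounding $\rho(X_n)$ through $\|X_n^k\|^{1/k}$ and $\E\operatorname{tr}(X_n^kX_n^{k*})^m$ with $k$ of order $\log n$ likewise needs moments of $\chi$ of unbounded order. Truncation does not rescue this: the discarded part has large operator norm, and neither the spectral radius nor the resolvent of $X_n$ is controlled by those of the truncated matrix. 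Norm-based arguments of this type do work under finite fourth moments in the dense regime, as in \cite{tao2013outliers,bordenave2016outlier}, but not in the generality of this theorem.

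The missing idea is to stop summing over $k$ in operator norm. Instead, work with the random analytic function $q_n(w)=\det(I-wY_n)$ on the open unit disk, where $w=1/z$.
\begin{itemize}
\item Tightness of $(q_n)$ needs only second moments. For example, $\E|\det(I-wX_n)|^2=\sum_k|w|^{2k}\binom{n}{k}k!\,n^{-k}\le(1-|w|^2)^{-1}$, because every term of a principal minor is a product of distinct entries. Neither heavy tails nor sparsity enter.
\item Each Taylor coefficient of $q_n$ is a polynomial in finitely many quantities $\operatorname{tr}X_n^k$ and $(v^{s,n})^*X_n^ku^{t,n}$. For these, your fixed-$k$ second-moment path counting (with truncation) is adequate.
\item Along subsequences where $V_n^*U_n\to M$, one shows that $q_n$ converges in law to $\kappa(w)\det(I_r-wM)$, with $\kappa$ a random analytic function that is almost surely zero-free on the disk. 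Hurwitz's theorem then gives both the count and the Hausdorff statement.
\end{itemize}
Your Step 1 and the fixed-$k$ part of Step 2 survive; the summation over $k$ just has to happen inside the determinant.

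Finally, perturbing $\varepsilon$ is not harmless for a statement at fixed $\varepsilon$. As literally written, the count equality fails when $E_n$ has a fixed eigenvalue $\mu$ with $1<|\mu|<1+\varepsilon$: it is counted in $m_n$, but the corresponding outlier of $Y_n$ has modulus below $1+\varepsilon$ with high probability. A separation between $\sigma(E_n)$ and $\{1<|z|\le 1+\varepsilon\}$ must therefore be assumed, and Rouch\'e cannot replace that hypothesis.
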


\begin{ass}\label{ass:K}
The sequence $(K_n)$ satisfies
\[
\frac{\log^9 n}{K_n}\xrightarrow[n\to\infty]{}0.
\]
and that 
there exists an absolute constant $C > 0$ such that 
\[
 \mathbb{P}(|A_{11}| \geq t) \ \leq\  2 \exp(-C t^2)\,. 
\] 
that is $\chi$ follows a sub-gaussian law.
\end{ass}

Moreover we have the following result concerning the eigenvectors of rank $1$ perturbation.
 
\begin{thm}\label{thm_eigenvector_r=1}[Theorem 1.6 of \cite{hachem2026extreme}]
Assume $r=1$ so $E_n=u_n(v_n)^\star $ and $Y_n=X_n+u_n(v_n)^\star$. Moreover assume that 
\[
\liminf_{n \to \infty} \left| \langle v_n,u_n\rangle \right|  \ >\  1\, 
\] 
and that Assumption \ref{ass:K} holds true.
 Recall the notation from Theorem \ref{thm_eigenvalues}.
When the event $\left\{ | \sigma^+_\varepsilon(Y_n) | = 1 \right\}$ is realized, 
let $\tilde{u}_n$ be an 
unit-norm right eigenvector of $Y_n$ corresponding to $\lambda_{\max}(Y_n)$.  
Otherwise, put $\tilde{u}_n = 0_n$. Then, it holds that 
\[
 \left|\left\langle\tilde{u}_n,\frac{u_n}{\|u_n\|} \right\rangle\right|^2 - \left( 1-\frac 1{|\langle u_n,v_n\rangle|^2}\right)\quad \xrightarrow[n\to\infty]{\mathbb{P}}\quad 0\, .
\]
\end{thm}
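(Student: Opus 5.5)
We prove Theorem \ref{thm_eigenvector_r=1} via a resolvent reduction for the rank-one spike, plus isotropic control of the resolvent of $X_n$ outside the unit disk. Write $\mu_n=\langle v_n,u_n\rangle$; note that $\langle v_n,u_n\rangle = v_n^\star u_n$, the nonzero eigenvalue of $u_nv_n^\star$. By Theorem \ref{thm_eigenvalues}, with probability tending to one there is exactly one eigenvalue $\lambda_n$ of $Y_n$ in $\{|z|\ge 1+\varepsilon\}$, and $|\lambda_n-\mu_n|\to0$ in probability (after passing to subsequences along which $\mu_n$ converges, which suffices for convergence in probability). On this event $\lambda_n\notin\sigma(X_n)$, since $\|X_n\|$ may exceed $1$ but the spectral radius of $X_n$ is asymptotically at most $1$ (the case $E_n=0$ of Theorem \ref{thm_eigenvalues}). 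Set $R(z)=(X_n-z)^{-1}$. From $(X_n+u_nv_n^\star)\tilde u_n=\lambda_n\tilde u_n$ we get
\[
\tilde u_n=-\langle v_n,\tilde u_n\rangle R(\lambda_n)u_n ,
\]
and $\langle v_n,\tilde u_n\rangle\neq0$ (otherwise $\lambda_n\in\sigma(X_n)$). Normalizing gives
\[
|\langle\tilde u_n,u_n\rangle|^2=\frac{|\langle u_n,R(\lambda_n)u_n\rangle|^2}{\|R(\lambda_n)u_n\|^2}.
\]
The eigenvalue equation is $1+\langle v_n,R(\lambda_n)u_n\rangle=0$.

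The key input is the deterministic-equivalent statement. For fixed $z$ with $|z|>1+\varepsilon/2$ and deterministic bounded vectors $a,b$,
\[
\langle a,R(z)b\rangle+z^{-1}\langle a,b\rangle\to0,\qquad \|R(z)b\|^2-\frac{\|b\|^2}{|z|^2-1}\to0
\]
in probability. These follow from the Neumann series $R(z)=-\sum_k z^{-k-1}X_n^k$, truncated at $k\le k_0$, together with two facts: $\langle a,X_n^kb\rangle\to0$ for $k\ge1$, and $\langle b,(X_n^\star)^kX_n^l b\rangle\to\delta_{kl}\|b\|^2$. Summing the truncated series gives $\|R(z)b\|^2\approx\|b\|^2\sum_k|z|^{-2k-2}=\|b\|^2/(|z|^2-1)$. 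The truncation is controlled by $\|X_n^k\|\le C(1+\delta)^k$ with high probability for $k$ up to a slowly growing $k_0$. I would take this estimate from the universality/norm bounds of \cite{brailovskaya2024universality}, which require the sub-Gaussian and $\log^9 n/K_n\to0$ conditions of Assumption \ref{ass:K}. The moment identities can be done by a direct combinatorial expansion, where sparsity enters only through vanishing-variance terms. Uniformity in $z$ on a compact annulus comes from equicontinuity, since $R$ is Lipschitz there with high probability, so we may substitute the random $\lambda_n\to\mu$.

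Plugging in, the numerator is $|\langle u_n,R(\lambda_n)u_n\rangle|^2\approx\|u_n\|^4/|\mu|^2$ and the denominator is $\|R(\lambda_n)u_n\|^2\approx\|u_n\|^2/(|\mu|^2-1)$. Hence
\[
\left|\left\langle\tilde u_n,\frac{u_n}{\|u_n\|}\right\rangle\right|^2\approx\frac{|\mu|^2-1}{|\mu|^2}=1-|\mu|^{-2},
\]
with $\mu$ the limit of $\mu_n$. This requires $\|u_n\|$ bounded below, which follows from $|\langle u_n,v_n\rangle|>1$ together with Assumption \ref{ass:E-bounded}. The main obstacle is the operator-norm control of powers $\|X_n^k\|$ uniformly for $k\le k_0\to\infty$ in the sparse regime, which is what makes the truncated Neumann series legitimate. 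I would try to obtain it by comparing $X_n$ with a Gaussian model via \cite{brailovskaya2024universality}, using the known bound $\|X^k\|\lesssim\sqrt{k+1}$ for Ginibre matrices.
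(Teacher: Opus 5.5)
Your argument is correct. At the top level it is the rank-one specialization of the paper's proof of Theorem~\ref{main_thm}(a); the statement itself is only quoted from \cite{hachem2026extreme}. The correspondence is as follows:
\begin{itemize}
\item The representation $\tilde u_n\propto R(\lambda_n)u_n$ is Lemma~\ref{lem:ker-evec-bijection} and Corollary~\ref{cor:unit-evec-rep} with $r=1$, so the localization Lemma~\ref{lem:kernel-localization-from-approx} is not needed.
\item Your numerator is Lemma~\ref{isotropic}.
\item Your denominator is Lemma~\ref{lemnormofR}, and your normalization $\|b\|^2/(|z|^2-1)$ is the right one.
\item Replacing $\lambda_n$ by $\mu$ is Lemma~\ref{lemma_R_n_z_n}.
\end{itemize}

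The real difference is in the resolvent inputs. The paper imports them as black boxes from \cite{hachem2026extreme} (Lemmas 4.2, 4.3, 4.6 there). You instead re-derive them from a truncated Neumann series and a moment computation.

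That derivation is sound except at the step you flag, and your proposed fix there is not a direct consequence of \cite{brailovskaya2024universality}. That result compares spectra of matrices that are linear in independent random summands. $X_n^k$ is a degree-$k$ polynomial in the entries, so comparing with Ginibre powers would require an extra linearization argument.

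You do not actually need operator-norm bounds on powers. The exact identity
\[
R(z)b=-\sum_{k=0}^{k_0}z^{-k-1}X_n^kb+z^{-k_0-1}R(z)X_n^{k_0+1}b
\]
bounds the remainder by $|z|^{-k_0-1}\|R(z)\|\,\|X_n^{k_0+1}b\|$. Your own moment identity gives $\|X_n^{k_0+1}b\|\to\|b\|$. So for a fixed large $k_0$, the only operator-norm input is $\|R_n(z)\|=O(1)$ with high probability at the single deterministic point $z=\mu$. This is precisely what \cite{hachem2026extreme} obtains from \cite{brailovskaya2024universality} (Lemma~4.2 there); the natural route is the Hermitization of $X_n-z$, which is linear in the entries.

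The same bound handles the random spectral parameter via
\[
R(\lambda)=(I-(\lambda-\mu)R(\mu))^{-1}R(\mu).
\]
This gives existence of $R(\lambda_n)$ and $\|R(\lambda_n)-R(\mu)\|\le 2c^2|\lambda_n-\mu|$, with no uniform statement on an annulus needed. If you want the power bounds anyway, Cauchy's formula gives $\|X_n^k\|\le\rho^{k+1}\sup_{|z|=\rho}\|R_n(z)\|$ once the spectrum of $X_n$ lies in the disk of radius $\rho$.

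What your route buys is transparency. The finite-order Neumann terms need only $K_n\to\infty$ and moment bounds. The sub-Gaussian and $\log^9 n/K_n\to0$ hypotheses enter solely through the resolvent norm bound. The paper's route is shorter but hides this division of labor.
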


\begin{rem}
Assumption \ref{ass:K} is needed in order to give an upper bound for $\|(X_n-\langle v_n, u_n \rangle I)^{-1}\|$, which is a necessary tool in order to compute and compare the outlier eigenvectors, see for example Corollary \ref{cor:Mn-converges-outside}. This is achieved by using the universality results from \cite{brailovskaya2024universality}. We shall make use of these results in this paper as well, see Section \ref{section_resolvent_bound}.  
\end{rem}

Our main goal will be to generalize Theorem \ref{thm_eigenvector_r=1} to a general rank $r\geq 1.$

In order to achieve that we will need some assumptions on $E_n$. 
\begin{ass}\label{ass:E}
There exist $\delta>0$ and distinct complex numbers $\mu^{(1)},\dots,\mu^{(m)}$ with $|\mu^{(\ell)}|\ge 1+\delta$
such that, for all $n$ large enough,
\begin{itemize}
\item[(i)] For $n$ large enough, $E_n$ admits a biorthogonal decomposition
\[
E_n=P_n\Lambda_n W_n^*,\qquad W_n^*P_n=I_r,
\]
with $\Lambda_n$ diagonal with entries the eigenvalues of $E_n$. We also assume that $P_n$ and $W_n$ have rank $r$ for large enough $n.$
\item[(ii)]It is true that
$\sigma_n^+(E_n):=\sigma(E_n)\cap\{z:|z|>1\}=\{\mu^{(1,n)},\dots,\mu^{(m_n,n)}\}$ (counting geometric multiplicity). Then 
\[
    d_{\bs H}\left(\sigma_n(E^n), \left\{\mu^{(1)},\dots,\mu^{(m)}\right\}\right) \xrightarrow[]{n\to \infty}0.
\]
Moreover for each $\ell\in\{1,\dots,m_n\}$, we assume that the (right) spike eigenspace
\[
F_{\ell,n}:=\ker(\mu^{(\ell,n)}I-E_n)\subset\C^n
\]
satisfies
\[
\qquad \lim_{n\to \infty}k_{\ell,n}= k_{\ell}\le r.
\]
where $\dim F_{\ell,n}=k_{\ell,n}$.
\end{itemize}
\end{ass}

\begin{rem}
 In Assumption \ref{ass:E} (ii) we assume the set of eigenvalues of $E_n$ converge to the set $\{ \mu^{(1)},\cdots \mu^{(m)}$\} . This is done mainly for expositional reasons. One may avoid this Assumption and state our main result, Theorem \ref{main_thm}, as Theorem \ref{thm_eigenvector_r=1} is stated. Moreover Assumption \ref{ass:E} (i) makes our computations cleaner, see Lemma \ref{lem:ker-evec-bijection} and \eqref{M_n()+1/mu L_n} for example. We believe that one can avoid this Assumption and restate the result in terms of the Jordan blocks of $E_n$. We do not pursue this direction.
\end{rem}
Next we present some notation and definitions.

Let $Q_{\ell,n}\in\C^{n\times k_{\ell,n}}$ have orthonormal columns spanning $F_{\ell,n}$ and set
\begin{align}\label{defn_P,Q}
P_{\ell,n}:=Q_{\ell,n}Q_{\ell,n}^*=\operatorname{Proj}_{F_{\ell,n}}.
\end{align}
Moreover we have the following definition.
\begin{defn}
Let $F\subset\C^n$ be a deterministic linear subspace and $x\in\C^n$.
We denote by $\langle x,F\rangle$ the norm of the orthogonal projection of $x$ onto $F$, i.e.
\[
\langle x,F\rangle:=\|\operatorname{Proj}_F x\|,\qquad \langle x,F\rangle^2=\|\operatorname{Proj}_F x\|^2.
\]
Equivalently, if $Q\in\C^{n\times k}$ has orthonormal columns spanning $F$ (so $Q^*Q=I_k$ and $\mathrm{Ran}(Q)=F$),
then $\operatorname{Proj}_F=QQ^*$ and
\begin{equation}\label{eq:BGN-inner-product}
\langle x,F\rangle^2=\|Q^*x\|^2=\sum_{j=1}^k |\langle x,q_j\rangle|^2.
\end{equation}
In particular, if $F=\mathrm{span}\{u\}$ is one-dimensional, then
\begin{equation}\label{eq:BGN-1d}
\langle x,F\rangle^2=\frac{|\langle x,u\rangle|^2}{\|u\|^2}.
\end{equation}
\end{defn}

\begin{thm}\label{main_thm}
Fix $\ell\in\{1,\dots,m\}$ and write $\mu:=\mu^{(\ell)}$. Let Assumptions \ref{ass:K} and \ref{ass:E} hold true. By Theorem \ref{thm_eigenvalues} there is some $\lambda_{\ell,n} \in \sigma(Y_n)$ such that 
\[
\lambda_{\ell,n}    \xrightarrow[n\to\infty]{\Prob} \mu.
\]
Moreover by Assumption \ref{ass:E} there is some sequence $\mu_n \in \sigma(E_n)$ such that 
\[
    \mu_n \to \mu.
\]
Set $F_n:=\operatorname{ker}(\mu_nI-E_n)$
and let $\tilde u_{\ell,n}$ denote a unit right eigenvector associated with $\lambda_{\ell,n}$. Then
\begin{enumerate}
\item 
\begin{equation}\label{eq:BGN-goal}
\langle \tilde u_{\ell,n},F_n\rangle^2
=\|Q_{\ell,n}^*\tilde u_{\ell,n}\|^2
\ \xrightarrow{\Prob}\ 1-\frac{1}{|\mu|^2}.
\end{equation}

\item  For any sequence $\mu'_n \in \sigma(Y_n)$ such that 
\[
    \mu'_n \to \mu' \neq \mu
\]
if one sets $F_{\ell',n}=\operatorname{ker}(\mu'_n I-E_n)$ and assumes that $F_n \perp F_{\ell',n}$ for all $n$ large enough
it is true that
\[
\langle \tilde u_{\ell,n}, F_{\ell',n}\rangle^2 \ \xrightarrow{\mathbb{P}} 
0.
\]
\end{enumerate}
\end{thm}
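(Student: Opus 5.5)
Our plan is to reduce the eigenvector equation to a finite-dimensional kernel problem via the resolvent of $X_n$, and then use the resolvent estimates of Section \ref{section_resolvent_bound} (built on \cite{brailovskaya2024universality}) to identify the limits of the compressed resolvents.

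\textbf{Step 1: finite-rank reduction.} On the event that $\lambda:=\lambda_{\ell,n}$ lies outside $\sigma(X_n)$ and $\|(X_n-\lambda)^{-1}\|$ is bounded, we write $E_n=P_n\Lambda_nW_n^*$. Then $Y_n\tilde u=\lambda\tilde u$ is equivalent to
\[
\tilde u=-(X_n-\lambda)^{-1}P_n\Lambda_n w,\qquad w:=W_n^*\tilde u\in\C^r .
\]
Inserting this back gives $\big(I_r+W_n^*(X_n-\lambda)^{-1}P_n\Lambda_n\big)w=0$. This is the kernel--eigenspace bijection: $\tilde u\mapsto w$ is injective, and $w\neq0$.

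\textbf{Step 2: deterministic limits of the compressed resolvents.} For $|z|\ge1+\delta/2$ we would show that, uniformly on compacts,
\[
W_n^*(X_n-z)^{-1}P_n=-z^{-1}W_n^*P_n+o_{\mathbb P}(1)=-z^{-1}I_r+o_{\mathbb P}(1),
\]
\[
P_n^*(X_n-z)^{-*}(X_n-z)^{-1}P_n=\frac{1}{|z|^2-1}P_n^*P_n+o_{\mathbb P}(1).
\]
The second limit is the quadratic form giving the norm. It follows from the Neumann series $(X_n-z)^{-1}=-\sum_k X_n^k/z^{k+1}$, applied to deterministic bounded vectors. One has $\langle a,X_n^k b\rangle\to0$ and $\langle X_n^ja,X_n^kb\rangle\to\delta_{jk}\langle a,b\rangle$ in probability; these are moment computations valid since $K_n\to\infty$. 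The tails of the series are controlled by the resolvent norm bound (Corollary \ref{cor:Mn-converges-outside}). Combined with $\lambda\to\mu$ and equicontinuity in $z$, this lets us substitute $z=\lambda_{\ell,n}$.

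\textbf{Step 3: localization of the kernel vector.} This is the main obstacle. From Step 1 and Step 2, $(I_r-\lambda^{-1}\Lambda_n+o(1))w=0$. The matrix $I-\mu^{-1}\Lambda_n$ is diagonal, with entries bounded away from $0$ except on the coordinates $t$ where $\mu^{(t,n)}=\mu_n$, that is, on the $\mu$-block. Normalizing $\|w\|=1$, we conclude that the components of $w$ off the $\mu$-block are $o_{\mathbb P}(1)$.

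The delicate point is that the $o(1)$ perturbation within the $\mu$-block is not controlled, since that block is singular there. We avoid it by never needing the exact within-block direction. Write $w=w_\mu+o(1)$. Then
\[
P_n\Lambda_nw=\mu_nP_nw_\mu+o(1)=\mu_n f+o(1),
\]
where $f:=P_nw_\mu\in F_n$. (Here we use that $F_n$ is spanned by the columns of $P_n$ in the $\mu$-block; this follows from biorthogonality and the rank-$r$ assumption.) Hence
\[
\tilde u=-\mu_n(X_n-\lambda)^{-1}f+o(1).
\]

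\textbf{Step 4: computing the overlap.} The normalization $\|\tilde u\|=1$ together with Step 2 gives $1=|\mu|^2\|f\|^2/(|\mu|^2-1)+o(1)$. Since $f\in F_n$ is deterministic given $w$, we have $Q_{\ell,n}^*\tilde u=-\mu_nQ^*_{\ell,n}(X_n-\lambda)^{-1}f+o(1)$. To handle the randomness of $f$, we apply the bilinear limit uniformly over the finite-dimensional unit sphere of $F_n$ via a net. Step 2 gives $Q^*(X_n-\lambda)^{-1}f=-\mu^{-1}Q^*f+o(1)$, so
\[
\|Q^*\tilde u\|^2=\|f\|^2+o(1)=1-|\mu|^{-2}+o(1),
\]
which is part (1). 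For part (2), the same identity with $Q_{\ell',n}$ gives $\|Q_{\ell',n}^*\tilde u\|\approx\|Q_{\ell',n}^*f\|=0$, because $f\in F_n\perp F_{\ell',n}$.

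Finally, all the exceptional events have vanishing probability by Theorem \ref{thm_eigenvalues} and Assumption \ref{ass:K}, which yields convergence in probability.
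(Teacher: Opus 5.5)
Your proposal is correct and follows essentially the same route as the paper. Both use the resolvent-based kernel--eigenspace reduction and localize the kernel vector onto the $\mu$-block without identifying its within-block direction (your $f=P_nw_\mu$ is the paper's $\hat w_n=U_n\tilde a_{\mu,n}\in F_n$). Both then take the limits $-z^{-1}\langle u,v\rangle$ and $(|z|^2-1)^{-1}\langle u,v\rangle$ for the compressed resolvent forms, with your normalization $\|\tilde u\|=1$ playing the role of the paper's denominator. The one real difference is that you sketch those resolvent limits yourself via a truncated Neumann expansion, whose remainder $R_n(z)(X_n/z)^{K+1}b$ is controlled by the resolvent bound of Lemma~\ref{lemma_R_n_z_n} (not Corollary~\ref{cor:Mn-converges-outside}) together with $\|X_n^{K+1}b\|\approx\|b\|$; the paper instead imports these limits from Lemmas 4.2, 4.3 and 4.6 of \cite{hachem2026extreme}.
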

\begin{rem}
    For Theorem \ref{main_thm}(b) the assumption that $F_n \perp F_{\ell',n}$ clearly is true when $E_n$ is diagonalizable. If one omits this assumption our result states that for some sequence $c_n \in \C^{k_{l,n}}$  of unit vectors, 
    \begin{align*}
      \langle \tilde u_{\ell,n}, F_{\ell',n}\rangle^2 - \frac{|\mu|^2-1}{|\mu|}\,\|Q_{\ell',n}^*Q_{\ell,n}c_n\|^2 \ \xrightarrow{\mathbb{P}} 
0.
    \end{align*}
    Here $Q_{\ell',n}$ and $Q_{\ell,n}$ are as in \eqref{defn_P,Q}.
\end{rem}
\section{Tools from Linear Algebra}
We start with some results from linear algebra that provide a convenient expression for the projection in Theorem \ref{main_thm} in terms of quantities we can control.

\begin{lem}[Finite-rank reduction: kernel--eigenspace bijection]\label{lem:ker-evec-bijection}
Let $X\in\C^{n\times n}$ and let $U,V\in\C^{n\times r}$ with $\operatorname{rank}(U)=r$ (equivalently, $U$ has full column rank).
Set $Y:=X+UV^*$.
Fix $\lambda\in\C$ such that $\lambda\notin\sigma(X)$, and define
\[
R(\lambda):=(X-\lambda I)^{-1},\qquad M(\lambda):=V^*R(\lambda)U\in\C^{r\times r}.
\]
Define $\Phi:\ker(I_r+M(\lambda))\to \ker(Y-\lambda I)$ by $\Phi(a):=R(\lambda)Ua$.
Then $\Phi$ is a linear bijection. In fact, for every $x\in\ker(Y-\lambda I)$ one has
\[
\Phi^{-1}(x)=-\,V^*x,
\]
and consequently $\dim\ker(I_r+M(\lambda))=\dim\ker(Y-\lambda I)$.
\end{lem}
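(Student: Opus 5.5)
The plan is a direct linear-algebra verification with three checks: $\Phi$ lands in $\ker(Y-\lambda I)$, $\Phi$ is injective, and $x\mapsto -V^*x$ maps $\ker(Y-\lambda I)$ into $\ker(I_r+M(\lambda))$ and inverts $\Phi$. The dimension equality then follows immediately.

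\emph{Well-definedness.} Take $a\in\ker(I_r+M(\lambda))$ and put $x=R(\lambda)Ua$. Using $(X-\lambda I)R(\lambda)=I$, we get
\[
(Y-\lambda I)x=(X-\lambda I)R(\lambda)Ua+UV^*R(\lambda)Ua=Ua+UM(\lambda)a=U(I_r+M(\lambda))a=0 .
\]
Hence $\Phi(a)\in\ker(Y-\lambda I)$, and $\Phi$ is clearly linear.

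\emph{The candidate inverse.} Take $x\in\ker(Y-\lambda I)$, so $(X-\lambda I)x=-UV^*x$. Since $\lambda\notin\sigma(X)$, this gives
\[
x=-R(\lambda)UV^*x .
\]
Set $a:=-V^*x$. The displayed identity reads $x=R(\lambda)Ua=\Phi(a)$, provided $a$ lies in the kernel. Applying $V^*$ to $x=R(\lambda)Ua$ gives $-a=V^*x=M(\lambda)a$, i.e.\ $(I_r+M(\lambda))a=0$. So $x\mapsto -V^*x$ maps $\ker(Y-\lambda I)$ into $\ker(I_r+M(\lambda))$, and $\Phi(-V^*x)=x$ for all such $x$. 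In particular $\Phi$ is surjective.

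\emph{Injectivity.} This is the one step that uses the rank hypothesis. If $\Phi(a)=R(\lambda)Ua=0$, then $Ua=0$ because $R(\lambda)$ is invertible, and $a=0$ because $U$ has full column rank. As a sanity check of the other composition, for $a\in\ker(I_r+M(\lambda))$ we have $-V^*\Phi(a)=-M(\lambda)a=a$, so the two maps are mutually inverse.

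Together these show that $\Phi$ is a linear bijection with $\Phi^{-1}(x)=-V^*x$, and hence $\dim\ker(I_r+M(\lambda))=\dim\ker(Y-\lambda I)$. None of the steps is a real obstacle.
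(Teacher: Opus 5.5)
Your proposal is correct and follows the paper's proof essentially step for step: well-definedness via $(X-\lambda I)R(\lambda)=I$, injectivity via invertibility of $R(\lambda)$ and full column rank of $U$, and surjectivity by writing $x=-R(\lambda)UV^*x$ and applying $V^*$; the only difference is that you set $a:=-V^*x$ directly where the paper sets $a:=V^*x$. Incidentally, your ``sanity check'' $-V^*\Phi(a)=a$ already gives injectivity by itself, so the full-column-rank hypothesis on $U$ is not actually needed for this lemma.
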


\begin{proof}
\textbf{Step 1: $\Phi$ is well-defined.}
Let $a\in\ker(I_r+M(\lambda))$. Using $Y-\lambda I=(X-\lambda I)+UV^*$ and $(X-\lambda I)R(\lambda)=I$,
\begin{align*}
(Y-\lambda I)\Phi(a)
&=\bigl((X-\lambda I)+UV^*\bigr)R(\lambda)Ua \\
&=(X-\lambda I)R(\lambda)Ua+UV^*R(\lambda)Ua \\
&=Ua+U\bigl(V^*R(\lambda)U\bigr)a \\
&=U(I_r+M(\lambda))a \\
&=0.
\end{align*}

\textbf{Step 2: $\Phi$ is injective.}
If $\Phi(a)=0$, then $R(\lambda)Ua=0$, hence $Ua=0$. Since $U$ has full column rank, $a=0$.

\textbf{Step 3: $\Phi$ is surjective (and compute $\Phi^{-1}$).}
Let $x\in\ker(Y-\lambda I)$. Then $(X-\lambda I)x+UV^*x=0$, so
\[
x=-R(\lambda)Ua,\qquad a:=V^*x.
\]
Applying $V^*$ yields $a=-M(\lambda)a$, hence $(I_r+M(\lambda))a=0$. Thus $x=\Phi(-a)$.
Moreover $\Phi^{-1}(x)=-V^*x$.
\end{proof}

As a result of the previous lemma we have the following corollary.

\begin{cor}[Closed-form representation of the unit outlier eigenvector]\label{cor:unit-evec-rep}
Let $X\in\C^{n\times n}$ and let $E=UV^*$ with $U,V\in\C^{n\times r}$ and $\operatorname{rank}(U)=r$.
Set $Y:=X+UV^*$. Fix $\lambda\in\C$ with $\lambda\notin\sigma(X)$ and define
\[
R(\lambda):=(X-\lambda I)^{-1},\qquad M(\lambda):=V^*R(\lambda)U.
\]
Assume $\lambda\in\sigma(Y)\setminus\sigma(X)$ and let $\tilde u$ be any unit right eigenvector of $Y$
associated with $\lambda$. Then there exists $a\in\ker(I_r+M(\lambda))\setminus\{0\}$ such that
\begin{equation}\label{eq:unit-evec-form}
\tilde u=\frac{R(\lambda)\,Ua}{\|R(\lambda)\,Ua\|}.
\end{equation}
Moreover, $a$ is unique up to multiplication by a nonzero scalar.
\end{cor}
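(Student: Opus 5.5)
The corollary follows directly from Lemma \ref{lem:ker-evec-bijection}; the plan is to read off both existence and uniqueness from the bijection $\Phi$.

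\textbf{Existence.} Since $\tilde u$ is a unit right eigenvector of $Y$ for $\lambda$, it lies in $\ker(Y-\lambda I)$ and is nonzero. The hypothesis $\lambda\notin\sigma(X)$ makes $R(\lambda)$ well defined, so the lemma applies. Surjectivity of $\Phi$ gives $b\in\ker(I_r+M(\lambda))$ with $\tilde u=\Phi(b)=R(\lambda)Ub$; explicitly, $b=-V^*\tilde u$. Injectivity of $\Phi$ together with $\tilde u\neq 0$ forces $b\neq 0$. Normalising and using $\|\tilde u\|=1$, we get $\|R(\lambda)Ub\|=1$, so $\tilde u=R(\lambda)Ub/\|R(\lambda)Ub\|$. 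Thus \eqref{eq:unit-evec-form} holds with $a:=b$. The right-hand side of \eqref{eq:unit-evec-form} is invariant under $a\mapsto ca$ for real $c>0$, so any positive multiple of $b$ also works.

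\textbf{Uniqueness up to scalars.} Suppose $a,a'\in\ker(I_r+M(\lambda))\setminus\{0\}$ both represent $\tilde u$ via \eqref{eq:unit-evec-form}. The denominators are nonzero: by Step 2 of the lemma, $R(\lambda)Ua\neq 0$ whenever $a\neq 0$, because $U$ has full column rank and $R(\lambda)$ is invertible. Hence
\[
\Phi(a)=\|R(\lambda)Ua\|\,\|R(\lambda)Ua'\|^{-1}\,\Phi(a'),
\]
and by linearity the right-hand side equals $\Phi(c\,a')$ with $c:=\|R(\lambda)Ua\|/\|R(\lambda)Ua'\|>0$. Injectivity of $\Phi$ then gives $a=c\,a'$. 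So $a$ is determined up to a nonzero (in fact positive) scalar. Conversely, every nonzero multiple $ca$ lies in the kernel, and \eqref{eq:unit-evec-form} reproduces $\tilde u$ up to the phase $c/|c|$. This matches the interpretation that $a$ is unique up to scaling once the unit eigenvector is fixed modulo phase.

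\textbf{Main obstacle.} There is no substantive difficulty; the only point needing care is the meaning of ``unique up to scalar''. Strictly, \eqref{eq:unit-evec-form} pins $a$ down up to a positive real factor. If one allows complex $c$, the vector $\tilde u$ changes by a unimodular phase, which is harmless for all later uses since only $|\langle\cdot,\cdot\rangle|$ and norms of projections enter. I will state it that way.
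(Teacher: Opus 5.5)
Your proposal is correct and follows the same route as the paper: existence comes from surjectivity of $\Phi$ (with the explicit choice $a=-V^*\tilde u$), and uniqueness comes from injectivity of $\Phi$. Your extra observation is also valid: the formula actually determines $a$ up to a positive real factor, since complex factors only change $\tilde u$ by a phase. This sharpens, and does not contradict, the paper's phrasing ``up to multiplication by a nonzero scalar''.
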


\begin{proof}
By Lemma~\ref{lem:ker-evec-bijection}, $\Phi(a)=R(\lambda)Ua$ is a bijection from $\ker(I_r+M(\lambda))$ to $\ker(Y-\lambda I)$.
Since $\tilde u\in\ker(Y-\lambda I)$ and $\tilde u\neq 0$, there exists $a\neq 0$ with $\tilde u=\Phi(a)/\|\Phi(a)\|$.
Uniqueness up to scaling follows from injectivity of $\Phi$.
\end{proof}

Thus we are interested in \eqref{eq:BGN-goal} for $\tilde u_{\ell,n}$ as in \eqref{eq:unit-evec-form}.
Next we give an approximation for the projections.

\begin{lem}\label{lem:kernel-localization-from-approx}
Recall Assumption \ref{ass:E} for $E_n$. Fix $\mu\in\C$ and assume that $\Lambda_n$ contains a block $\mu I_k$ of size $k\ge 1$, i.e.
\[
\Lambda_n=\operatorname{diag}(\mu I_k,\Lambda_{\neq,n}),
\]
with $\Lambda_{\neq,n}\in\C^{(r-k)\times(r-k)}$ diagonal.

Define the spike-adapted rank factorization
\begin{equation}\label{eq:UV-spike-adapted}
U_n:=P_n,\qquad V_n:=W_n\overline{\Lambda_n},
\end{equation}
so that $E_n=U_nV_n^*$ and $V_n^*U_n=\Lambda_n$.

Notice that due to Assumption \ref{ass:E} there is $c_0>0$ such that
\begin{equation}\label{eq:sep}
\min_{\nu\in\sigma(\Lambda_{\neq,n})}\Big|1-\frac{\nu}{\mu}\Big|\ \ge\ c_0.
\end{equation}

For any $C_n\in\C^{n\times n}$ and $z\notin\sigma(C_n)$ define
\[
J_n(z):=(C_n-zI)^{-1},\qquad N_n(z):=V_n^*J_n(z)U_n\in\C^{r\times r}.
\]
Let $\lambda_n\in\C$ satisfy $\lambda_n\notin\sigma(J_n)$, and assume that for some $\varepsilon_n\in(0,c_0/2)$,
\begin{equation}\label{eq:Mn-approx}
\Big\|\big(I_r+N_n(\lambda_n)\big)-\Big(I_r-\frac{1}{\mu}\Lambda_n\Big)\Big\|\ \le\ \varepsilon_n.
\end{equation}

Let $a_n\in\ker(I_r+N_n(\lambda_n))\setminus\{0\}$ and decompose $a_n=(a_{\mu,n},a_{\neq,n})$ according to
$\C^r=\C^k\oplus\C^{r-k}$. Then:
\begin{enumerate}
\item[(i)] $a_{\mu,n}\neq 0$.
\item[(ii)] The off-resonant component is small:
\begin{equation}\label{eq:a-ne-small}
\|a_{\neq,n}\|\ \le\ \frac{\varepsilon_n}{c_0-\varepsilon_n}\,\|a_{\mu,n}\|
\ \le\ \frac{2}{c_0}\,\varepsilon_n\,\|a_{\mu,n}\|.
\end{equation}
\end{enumerate}
\end{lem}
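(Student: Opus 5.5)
Write $D_n:=I_r-\frac1\mu\Lambda_n$ and $\Delta_n:=(I_r+N_n(\lambda_n))-D_n$. By \eqref{eq:Mn-approx}, $\|\Delta_n\|\le\varepsilon_n$. Since $\Lambda_n=\operatorname{diag}(\mu I_k,\Lambda_{\neq,n})$, the matrix $D_n$ is block diagonal,
\[
D_n=\operatorname{diag}\big(0_{k},\,D_{\neq,n}\big),\qquad D_{\neq,n}:=I_{r-k}-\tfrac1\mu\Lambda_{\neq,n}.
\]
Here $D_{\neq,n}$ is diagonal with entries $1-\nu/\mu$, $\nu\in\sigma(\Lambda_{\neq,n})$. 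By \eqref{eq:sep} it is invertible with $\|D_{\neq,n}^{-1}\|\le 1/c_0$; equivalently, $\|D_{\neq,n}x\|\ge c_0\|x\|$ for all $x\in\C^{r-k}$.

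The plan is to read off the lower block row of the kernel equation. Let $\Pi_{\neq}:\C^r\to\C^{r-k}$ be the coordinate projection onto the last $r-k$ coordinates, and write $\Delta_n$ in blocks $\Delta_n=\begin{pmatrix}\Delta_{11}&\Delta_{12}\\ \Delta_{21}&\Delta_{22}\end{pmatrix}$. Each block has norm at most $\varepsilon_n$.

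From $(D_n+\Delta_n)a_n=0$, applying $\Pi_{\neq}$ gives
\[
D_{\neq,n}a_{\neq,n}+\Delta_{21}a_{\mu,n}+\Delta_{22}a_{\neq,n}=0.
\]
Taking norms and using the lower bound on $D_{\neq,n}$,
\[
c_0\|a_{\neq,n}\|\le \|D_{\neq,n}a_{\neq,n}\|\le \varepsilon_n\|a_{\mu,n}\|+\varepsilon_n\|a_{\neq,n}\|.
\]
Hence $(c_0-\varepsilon_n)\|a_{\neq,n}\|\le\varepsilon_n\|a_{\mu,n}\|$. Since $\varepsilon_n<c_0/2$, we have $c_0-\varepsilon_n>c_0/2>0$, and dividing gives the first inequality in \eqref{eq:a-ne-small}. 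The second inequality follows from $\frac{1}{c_0-\varepsilon_n}\le\frac{2}{c_0}$.

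For (i), argue by contradiction. If $a_{\mu,n}=0$, the bound just proved gives $\|a_{\neq,n}\|\le 0$, so $a_n=0$. This contradicts $a_n\neq0$. More properly, (i) should be established first using the same inequality with $a_{\mu,n}=0$, and (ii) then follows. Alternatively, one can observe directly that $D_{\neq,n}+\Delta_{22}$ is invertible by a Neumann series, since $\|D_{\neq,n}^{-1}\Delta_{22}\|\le\varepsilon_n/c_0<1/2$, which forces $a_{\neq,n}=0$.

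There is no real obstacle. The only points needing care are that \eqref{eq:sep} gives $|1-\nu/\mu|\ge c_0$ uniformly in $n$, which is what yields the operator lower bound on $D_{\neq,n}$, and the bookkeeping of the block decomposition of $\Delta_n$. The hypothesis $\lambda_n\notin\sigma(C_n)$ is only used so that $N_n(\lambda_n)$ is defined.
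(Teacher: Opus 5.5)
Your proof is correct and follows the paper's argument: both read off the lower block row of $(I_r+N_n(\lambda_n))a_n=0$ and combine $\|(I_{r-k}-\frac{1}{\mu}\Lambda_{\neq,n})^{-1}\|\le 1/c_0$ with the bound $\varepsilon_n$ on the lower blocks of the perturbation. The only cosmetic difference is that the paper first inverts the lower-right block of $I_r+N_n(\lambda_n)$ by a Neumann series (inverse norm at most $1/(c_0-\varepsilon_n)$) and writes $a_{\neq,n}$ explicitly in terms of $a_{\mu,n}$, whereas you absorb the $\varepsilon_n\|a_{\neq,n}\|$ term directly via $\|D_{\neq,n}x\|\ge c_0\|x\|$, which gives the same constant.
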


\begin{proof}

Set
\[
K_n:=I_r+N_n(\lambda_n),\qquad D_n:=I_r-\frac{1}{\mu}\Lambda_n.
\]
Then
\[
D_n=\begin{pmatrix}
0_{k\times k} & 0\\
0 & D_{22,n}
\end{pmatrix},
\qquad
D_{22,n}:=I_{r-k}-\frac{1}{\mu}\Lambda_{\neq,n}.
\]
By \eqref{eq:sep}, $D_{22,n}$ is invertible and
\begin{equation}\label{eq:D22-inv}
\|D_{22,n}^{-1}\|\le \frac{1}{c_0}.
\end{equation}

Write
\[
K_n=\begin{pmatrix}
K_{11,n} & K_{12,n}\\
K_{21,n} & K_{22,n}
\end{pmatrix}.
\]
The bound \eqref{eq:Mn-approx} implies
\begin{equation}\label{eq:block-bounds}
\|K_{21,n}\|\le \varepsilon_n,\qquad \|K_{22,n}-D_{22,n}\|\le \varepsilon_n.
\end{equation}

\textbf{Step 1: $K_{22,n}$ is invertible.}
Let $E_{22,n}:=K_{22,n}-D_{22,n}$. Then $\|D_{22,n}^{-1}E_{22,n}\|\le \varepsilon_n/c_0<1/2$.
Hence $K_{22,n}=D_{22,n}(I+D_{22,n}^{-1}E_{22,n})$ is invertible and
\begin{equation}\label{eq:K22-inv}
\|K_{22,n}^{-1}\|\le \frac{1}{c_0-\varepsilon_n}.
\end{equation}

\textbf{Step 2: kernel localization.}
Let $a_n=\binom{a_{\mu,n}}{a_{\neq,n}}\in\ker(K_n)\setminus\{0\}$. From the second block row,
\[
K_{21,n}a_{\mu,n}+K_{22,n}a_{\neq,n}=0,
\]
so
\[
a_{\neq,n}=-K_{22,n}^{-1}K_{21,n}a_{\mu,n}.
\]
Taking norms and using \eqref{eq:block-bounds} and \eqref{eq:K22-inv} gives \eqref{eq:a-ne-small}.
If $a_{\mu,n}=0$ then $a_{\neq,n}=0$, contradicting $a_n\neq 0$.
\end{proof}

\section{Results on bilinear forms of the resolvent of $X_n$.}\label{section_resolvent_bound}
In what follows for any $z $ not an eigenvalue of $X_n$ set $R_n(z)=(X_n-zI)^{-1}$.

\begin{lem}\label{isotropic}
Let $u_n$ and $v_n$ be two sequences of vectors in $\C^n$ such that there is some $C>0$ for which $\|u_n\|,\|v_n\|<C$ for all $n \in \N$.
Then for any $|z|>1$ let $\mathcal{E}_{\text{inv},n}(z)$ denote the event that $(X_n-z)$ is invertible. Then
\begin{equation}\label{R_ninvert}
    \mathbb{P}\big( \mathcal{E}_{\text{inv},n}(z)\big) \to 1.
\end{equation}
Moreover we have the following approximation
\[
1_{\mathcal{E}_{\text{inv},n}(z)}\left(
\langle R_n(z)u_n, v_n\rangle + \frac{1}{z}\langle u_n, v_n \rangle \right)\xrightarrow{\Prob} 0.
\]
\end{lem}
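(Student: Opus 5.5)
Fix $z$ with $|z|>1$, write $\rho:=|z|>1$, and pick $\eta>0$ with $1+\eta<\rho$. The plan is to expand $R_n(z)$ as a Neumann series on a high-probability event where the spectral radius of $X_n$ is small, and then to control the individual terms $\langle X_n^k u_n,v_n\rangle$. Formally,
\[
R_n(z)=-\frac1z\Big(I-\frac{X_n}{z}\Big)^{-1}=-\sum_{k\ge0}\frac{X_n^k}{z^{k+1}},
\]
which gives $\langle R_n(z)u_n,v_n\rangle+\frac1z\langle u_n,v_n\rangle=-\sum_{k\ge1}z^{-k-1}\langle X_n^ku_n,v_n\rangle$. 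The expansion is justified on the event where the series converges, so the real work is in bounding $X_n^k$.

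\textbf{Step 1 (invertibility and uniform control).} From the norm bounds of \cite{brailovskaya2024universality} together with Assumption~\ref{ass:K} (this is where $\log^9n/K_n\to0$ and subgaussianity enter), I would derive that for each fixed $k$, with probability tending to one, $\|X_n^k\|\le C_k$. More importantly, I want that for some fixed $N$ one has $\|X_n^N\|\le(1+\eta)^N$ with high probability; this amounts to saying that the spectral radius of $X_n$ is at most $1+o(1)$ in a quantitative, power-norm sense. It follows as in \cite{hachem2026extreme}, which controls $\|X_n^k\|$ via universality for fixed powers, since the Gaussian or circular analogue has $\|X^k\|\to\sqrt{k+1}$. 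On this event $\rho(X_n)\le 1+\eta<|z|$, so $X_n-z$ is invertible, which proves \eqref{R_ninvert}. Moreover, splitting $k=qN+s$ with $0\le s<N$, the tail $\sum_{k\ge L}|z|^{-k-1}\|X_n^k\|\,\|u_n\|\|v_n\|$ is bounded by a geometric series in $((1+\eta)/\rho)^{N}$, times constants coming from $\max_{s<N}\|X_n^s\|$. This bound is uniformly small once $L$ is large.

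\textbf{Step 2 (fixed moments).} For each fixed $k\ge1$ I would show $\langle X_n^ku_n,v_n\rangle\xrightarrow{\Prob}0$ by computing $\E|\langle X_n^ku_n,v_n\rangle|^2$. The expectation of the product vanishes unless each edge of the two paths $i_0\to\dots\to i_k$ is paired, using the independence, centering and variance $1/n$ of the entries. A standard path-counting argument then bounds the second moment by $O(\|u_n\|^2\|v_n\|^2/n)$ plus contributions from higher moments. Each edge with multiplicity $p\ge3$ costs a factor $\E|X_{11}|^p\lesssim K_n^{-p/2}\cdot K_n/n$, and these terms are $o(1)$ because $K_n\to\infty$. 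Chebyshev's inequality then finishes this step.

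\textbf{Conclusion and main obstacle.} Combining the two steps, I truncate the series at $L$ to make the tail smaller than $\epsilon$ on the good event, and I use Step 2 for the finitely many terms $k<L$. The main obstacle is Step 1: obtaining the high-probability bound on $\|X_n^N\|$ (equivalently, the spectral radius bound) in the sparse regime. I would import it from the universality comparison of \cite{brailovskaya2024universality}, applied to the linearization of $X_n^N$, exactly as is done in \cite{hachem2026extreme}.
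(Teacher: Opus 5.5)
Your proof is sound in outline, and it takes a genuinely different route from the paper. The paper proves nothing directly here. Invertibility is quoted from Lemma 4.2 of \cite{hachem2026extreme}. The bilinear-form limit is reduced to Lemma 4.3 there in three moves: pass to subsequences along which $\langle u_n,v_n\rangle\to\xi$; if $\xi\neq0$, rescale so that $|\langle\tilde u_n,\tilde v_n\rangle|>1$; if $\xi=0$, replace $v_n$ by $v_n+\epsilon u_n/\|u_n\|^2$ and apply the first case twice. Your Neumann-series truncation combined with second-moment path counting is a direct argument. It is uniform in $(u_n,v_n)$ and needs neither the case split nor the perturbation, which in the paper tacitly requires $\|u_n\|$ to be bounded below. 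It also isolates where the hypotheses enter. Step 2 does not need $K_n\to\infty$: $\E|\{X_n\}_{11}|^p\lesssim K_n^{1-p/2}/n\le 1/n$ for every $p\ge2$, so every admissible pair of paths contributes $O_k(\|u_n\|^2\|v_n\|^2/n)$. Sparsity and subgaussianity are therefore used only in Step 1. The paper's route buys brevity; yours is self-contained apart from a norm bound.

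The step that needs a better justification is Step 1.

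\begin{itemize}
\item The heuristic you quote is wrong. For Ginibre matrices $\|X^k\|$ tends to $\sqrt{(k+1)^{k+1}/k^{k}}$, the square root of the Fuss--Catalan edge, not $\sqrt{k+1}$. Already for $k=1$ your formula gives $\sqrt2$ instead of $2$.
\item Your conclusion survives, since $\bigl((N+1)^{N+1}/N^N\bigr)^{1/(2N)}\to1$. But a high-probability bound on $\|X_n^N\|$ concerns a nonlinear polynomial in the entries. The universality theorems of \cite{brailovskaya2024universality} do not apply to it directly, and it is not the ingredient the paper takes from \cite{hachem2026extreme}.
\end{itemize}

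You can get what you need from that ingredient: the fixed-$z$ bound $\|R_n(z)\|\le c$ with probability tending to one, for $|z|\ge1+\eta'$, with $c$ depending only on $\eta'$. Proceed as follows.

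\begin{enumerate}
\item Fix $1+\eta'<|z|$. By Theorem~\ref{thm_eigenvalues} with $E_n=0$, $\sigma(X_n)\subset\{|w|<1+\eta'\}$ with probability tending to one.
\item Apply the resolvent bound on a finite net $\{w_j\}$ of the circle $|w|=1+\eta'$ with mesh at most $1/(2c)$. Propagate it via $R_n(w)=R_n(w_j)\bigl(I-(w-w_j)R_n(w_j)\bigr)^{-1}$ to get $\sup_{|w|=1+\eta'}\|R_n(w)\|\le 2c$ with probability tending to one.
\item Cauchy's formula $X_n^k=-\frac{1}{2\pi i}\oint_{|w|=1+\eta'}w^kR_n(w)\,dw$ then gives $\|X_n^k\|\le 2c(1+\eta')^{k+1}$ for all $k$ simultaneously. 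This is exactly what your tail bound uses.
\end{enumerate}

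With this substitution your proof rests on the same hard input as the paper's. It then replaces the appeal to Lemma 4.3 of \cite{hachem2026extreme} by an explicit computation.
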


\begin{proof}
The first part of the lemma, \eqref{R_ninvert}, follows from Lemma 4.2 of \cite{hachem2026extreme}.

For the second part we shall assume without generality loss that 
\[
 \langle u_n,v_n\rangle \ \xrightarrow[n\to\infty]{} \ \xi \in \C , 
\]
since it is sufficient to establish convergence in probability along all subsequential limits of
$\langle u_n,v_n\rangle$.

We first prove the claim when $\xi \neq 0$. In this case one may set 
\[
\tilde u_n= \frac{u_n}{|\xi|^{1/2} (1-\epsilon)} \ \  \text{ and }\ \ \ \tilde{v}_n= \frac{v_n}{|\xi|^{1/2} (1-\epsilon)}
\]
for $\epsilon>0$ small enough. Then for all $n$ large enough
\[
   |\langle \tilde u_n, \tilde v_n \rangle| >1.
\]
Clearly it is sufficient to prove
\[
1_{\mathcal{E}_{\text{inv},n}(z)}\left(
\langle R_n(z)\tilde u_n, \tilde v_n\rangle + \frac{1}{z}\langle \tilde u_n, \tilde v_n \rangle \right)\xrightarrow{\Prob} 0.
\]
The latter can be proven exactly as Lemma 4.3 of \cite{hachem2026extreme}.

It remains to prove the claim in the case where $\xi =0$. Then we may assume $u_n\neq 0$ and $v_n\neq 0$ for all $n$ large enough, else the claim follows trivially.

For $\epsilon>0$ we set $\bar v_n=\frac{\epsilon}{\|u_n\|^2} u_n+v_n$. Then 
\[
    \langle   u_n, \bar v_n\rangle= \frac{\epsilon}{\| u_n\|^2} \langle u_n, u_n\rangle+ \langle u_n,v_n\rangle\xrightarrow{n \to \infty }\epsilon \neq 0.
\]
But we already have proven that
\[
1_{\mathcal{E}_{\text{inv},n}(z)}\left(
\langle R_n(z) u_n, v_n\rangle + \frac{\epsilon}{\|u_n\|^2}\langle  R_n(z) u_n, u_n \rangle + \frac{\epsilon}{z}\right)\xrightarrow{\Prob} 0.
\]
The claim now follows since 
\[
1_{\mathcal{E}_{\text{inv},n}(z)}\left( \frac{\epsilon}{\|u_n\|^2}\langle  R_n(z) u_n, u_n \rangle + \frac{\epsilon}{z}\right)\xrightarrow{\Prob} 0.
\]
\end{proof}

\begin{cor}\label{cor:Mn-converges-outside}
Let $C^n_1$ and $C^n_2$ be two sequences of $k_1\times n$ and $k_2 \times n$ matrices for some $k_1,k_2 \in \N$. Assume that there is some $C>0$ 
\begin{equation}\label{boundingC_1,2}
    \| C^n_1\|, \|C^n_2 \|< C, \ \ \ \text{ for all } n.
\end{equation}
Recall the event $\mathcal{E}_{\text{inv},n}(z)$ from Lemma \ref{isotropic}. Then  
\[
1_{\mathcal{E}_{\text{inv},n}(z)}\left\| C^n_1 R_n(z) (C^n_2)^*- \frac{1}{z} C^n_1 (C^n_2)^* \right\| \xrightarrow{\Prob} 0.
\]
\end{cor}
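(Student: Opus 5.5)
The plan is to reduce the matrix statement to finitely many scalar bilinear forms and apply Lemma \ref{isotropic} to each of them. Since $k_1,k_2$ are fixed, all norms on $\C^{k_1\times k_2}$ are equivalent. In particular
\[
\|A\|\le \Big(\sum_{i\le k_1,\,j\le k_2}|A_{ij}|^2\Big)^{1/2},
\]
so it suffices to show that every entry of $1_{\mathcal{E}_{\text{inv},n}(z)}\big(C^n_1R_n(z)(C^n_2)^*-\tfrac1z C^n_1(C^n_2)^*\big)$ is $o_{\mathbb P}(1)$. A finite sum of $o_{\mathbb P}(1)$ terms is again $o_{\mathbb P}(1)$, which gives the claim in operator norm.

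For the entrywise step, write the rows of $C^n_1$ as $(x^{i,n})^*$ for $i\in[k_1]$ and the rows of $C^n_2$ as $(y^{j,n})^*$ for $j\in[k_2]$. By \eqref{boundingC_1,2}, $\|x^{i,n}\|,\|y^{j,n}\|\le C$ uniformly in $n$, $i$ and $j$, so these vectors satisfy the boundedness hypothesis of Lemma \ref{isotropic}. The $(i,j)$ entry of $C^n_1R_n(z)(C^n_2)^*$ is the bilinear form of $R_n(z)$ between $y^{j,n}$ and $x^{i,n}$. The corresponding entry of $C^n_1(C^n_2)^*$ is the inner product of the same pair. I will then invoke Lemma \ref{isotropic} with $u_n:=y^{j,n}$ and $v_n:=x^{i,n}$ at the fixed point $|z|>1$. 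This gives, on $\mathcal{E}_{\text{inv},n}(z)$, that the resolvent bilinear form is asymptotic to its deterministic centering $\tfrac1z\times$ the inner product, with the sign and orientation dictated by the paper's conventions for $R_n$ and $\langle\cdot,\cdot\rangle$.

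I expect the main obstacle to be exactly this sign and orientation bookkeeping, not any probabilistic estimate. Read literally with $\langle a,b\rangle=b^*a$ and $R_n(z)=(X_n-zI)^{-1}$, Lemma \ref{isotropic} gives $x^*R_n(z)y\approx -\tfrac1z\,x^*y$. The centering in the corollary must therefore be matched with care to the convention under which the lemma is applied. Before writing details, I will fix one convention and apply it identically in the lemma and in the entry identification, so that the centering subtracted in the corollary is literally the limit produced by the lemma. If a mismatch remains, I would trace it back to the conventions of Lemma 4.3 of \cite{hachem2026extreme} rather than alter the corollary.

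Two routine points remain. All entries are multiplied by the same indicator $1_{\mathcal{E}_{\text{inv},n}(z)}$, so no additional intersection of events is needed. Moreover, $\mathbb P(\mathcal E_{\text{inv},n}(z))\to1$ by \eqref{R_ninvert}, although the statement is already localized to that event. Finally, the equivalence-of-norms constant depends only on $k_1k_2$, so the final bound is uniform and the operator-norm convergence in probability follows.
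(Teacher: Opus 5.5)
Your reduction is the same as the paper's. You apply Lemma \ref{isotropic} to each of the $k_1k_2$ entries $(x^{i,n})^*R_n(z)y^{j,n}=\langle R_n(z)y^{j,n},x^{i,n}\rangle$. You then pass to the operator norm through a comparison depending only on $k_1,k_2$: the paper uses $\|J\|\le k_1k_2\max_{i,j}|J_{ij}|$, you use the Frobenius norm. The row bounds $\|x^{i,n}\|\le\|C^n_1\|$ and $\|y^{j,n}\|\le\|C^n_2\|$ supply the hypothesis of the lemma.

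The sign mismatch you flag is real, though, and no choice of conventions removes it. Your fallback of tracing it back ``rather than alter the corollary'' would therefore fail. The corollary is a pure matrix statement with no inner product in it, and $R_n(z)=(X_n-zI)^{-1}$ is fixed. Its natural centering is $-z^{-1}I$, which is exact when $X_n=0$. Concretely, Lemma \ref{isotropic} with $u_n=v_n=e_1$ gives $(R_n(z))_{11}\to-1/z$ in probability. So for $k_1=k_2=1$ and $C^n_1=C^n_2=e_1^*$, the quantity in the statement tends to $2/|z|\neq0$, while $\mathbb{P}(\mathcal{E}_{\text{inv},n}(z))\to1$. What your entrywise argument actually proves is
\[
1_{\mathcal{E}_{\text{inv},n}(z)}\Big\| C^n_1 R_n(z) (C^n_2)^*+ \frac{1}{z} C^n_1 (C^n_2)^* \Big\| \xrightarrow{\Prob} 0 .
\]
The paper's proof, which applies the lemma without commenting on the sign, proves the same thing. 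This corrected version is also the one the paper uses later. Indeed, \eqref{M_n()+1/mu L_n} reads $M_n+\mu^{-1}\Lambda_n\to0$, matching the centering $I_r-\mu^{-1}\Lambda_n$ in Lemma \ref{lem:kernel-localization-from-approx}. Meanwhile \eqref{A_nc_n} only feeds $\|A_nc_n\|^2\approx|\mu|^{-2}\|c_n\|^2$ into the argument, which does not depend on the sign. So resolve the mismatch by flipping the centering to $+\frac1z$; with that correction your proof is complete.
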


\begin{proof}
On the event $\mathcal{E}_{\text{inv},n}(z)$ notice that the $i,j$-th entry of $C^n_1 R_n(z) (C^n_2)^*$ is equal to 
\[
    (C^n_1)_i\, R_n(z)\, (C_2^n)^*_j,
\]
where $(C^n_1)_i$ (respectively $(C_2^n)^*_j$) denotes the $i$-th row of $C^n_1$ (respectively the $j$-th column of $(C_2^n)^*$). Thus due to \eqref{boundingC_1,2}, we may apply Lemma \ref{isotropic} entrywise and use the fact that for any $k_1\times k_2$ matrix $J$,
\begin{align}\label{J_ij}
    \| J\|\leq k_1 k_2 \max_{i,j \in [k_1]\times [k_2]} |J_{i,j}|
\end{align}  to conclude. In \eqref{J_ij}, $J_{i,j}$ denotes the $(i,j)$-th entry of $J$.
\end{proof}

\begin{lem}\label{lemma_R_n_z_n}
Let $(z_n)_{n\ge 1}$ satisfy $|z_n|\ge 1+\varepsilon$ for all $n$ and $z_n\to z$ with $|z|\ge 1+\varepsilon$.
There is some absolute constant $c>0$ such that if one sets $\mathcal{E}_{\text{bound},n}(z)$ to be the event where $(X_n-z)$ and $(X_n-z_n)$ are invertible and 
\[
    \| R_n(z)\|,\| R_n(z_n)\|<c,
\]
then it is true that 
\[
   \mathbb{P}(\mathcal{E}_{\text{bound},n}(z)) \xrightarrow[n \to \infty]{}  1.
\]
Moreover on this event
\begin{equation}\label{R_n(z_n)-R_n(z)}
1_{\mathcal{E}_{\text{bound},n}(z)}  \|R_n(z_n)-R_n(z)\| \  \xrightarrow[n \to \infty]{}\ 0.
\end{equation}
\end{lem}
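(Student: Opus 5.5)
The plan is to get the uniform bound on the resolvents from the universality results of \cite{brailovskaya2024universality}, which is the same input behind Lemma 4.2 of \cite{hachem2026extreme}. The continuity statement \eqref{R_n(z_n)-R_n(z)} will then follow from the resolvent identity. Throughout I take $\varepsilon$ small enough that $1+\varepsilon<|z|$, so that $z$ lies strictly outside the spectral region determined by the first step.

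\textbf{Step 1: a uniform bound on the smallest singular value.} Under Assumption \ref{ass:K}, the sparse i.i.d.\ matrix $X_n$ with sub-gaussian entries and $\log^9 n/K_n\to 0$ falls within the scope of the universality theorems of \cite{brailovskaya2024universality}. In particular, for every fixed $w$ with $|w|\ge 1+\varepsilon/2$, the spectrum of the Hermitization of $X_n-w$ is close, in Hausdorff distance, to that of a Gaussian model. This is the route taken in \cite{hachem2026extreme}; it can also be phrased through the operator-valued free probability model. Its consequence is that
\[
s_{\min}(X_n-w)\ \ge\ |w|-1-o(1)\ \ge\ \varepsilon/4
\]
with probability tending to one. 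I will quote this bound at the level of Lemma 4.2 of \cite{hachem2026extreme}, in its quantitative form $\|R_n(w)\|\le c$ where $c$ depends only on $\varepsilon$.

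\textbf{Step 2: passing from fixed points to the moving sequence.} Applying Step 1 at $w=z$ gives $\|R_n(z)\|\le c/2$ with high probability. For $z_n$ I would use a perturbation argument. On that event,
\[
X_n-z_n=(X_n-z)\bigl(I-(z_n-z)R_n(z)\bigr).
\]
Since $|z_n-z|\to 0$, we have $|z_n-z|\,c/2<1/2$ for $n$ large. A Neumann series then shows that $X_n-z_n$ is invertible with $\|R_n(z_n)\|\le 2\cdot c/2=c$. This yields $\mathbb{P}(\mathcal{E}_{\text{bound},n}(z))\to 1$. With this argument no uniform-in-$w$ statement over a compact set is needed, since only the single fixed point $z$ is used.

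\textbf{Step 3: continuity.} On $\mathcal{E}_{\text{bound},n}(z)$ the resolvent identity gives
\[
R_n(z_n)-R_n(z)=(z_n-z)\,R_n(z_n)R_n(z),
\]
so $\|R_n(z_n)-R_n(z)\|\le c^2|z_n-z|\to 0$ deterministically on the event. This proves \eqref{R_n(z_n)-R_n(z)}.

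The main obstacle is Step 1. There I must make sure that the universality input really gives a bound on $\|R_n(z)\|$ that is uniform in $n$, rather than only convergence of bilinear forms as in Lemma \ref{isotropic}. This means checking that the hypotheses of \cite{brailovskaya2024universality} — the matrix parameters $\sigma(X)$ and $R(X)$ being of order $\sqrt{\log n/K_n}$ up to sub-gaussian logarithmic factors — are met under $\log^9 n/K_n\to 0$. It also means checking that the constant obtained depends only on $\varepsilon$.
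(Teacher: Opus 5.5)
Your proof follows the paper's route. The event $\mathcal{E}_{\text{bound},n}(z)$ comes from Lemma~4.2 of \cite{hachem2026extreme}, and \eqref{R_n(z_n)-R_n(z)} follows from the resolvent identity, which on that event gives the deterministic bound $\|R_n(z_n)-R_n(z)\|\le c^2|z_n-z|$. The only difference is your Step~2. The factorization $X_n-z_n=(X_n-z)\bigl(I-(z_n-z)R_n(z)\bigr)$ together with a Neumann series means you need the resolvent bound only at the fixed point $z$, whereas the paper takes the bound at both $z$ and $z_n$ directly from the cited lemma. This is a harmless and slightly more economical variant.

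You should, however, drop the explicit estimate $s_{\min}(X_n-w)\ge |w|-1-o(1)\ge\varepsilon/4$ in Step~1, because it is false. Such a bound holds for normal matrices with spectral radius near $1$, but $X_n$ is far from normal. The relevant quantity is the lower edge of the limiting singular value distribution of $X_n-w$, which is the same as for the Ginibre ensemble once $K_n\to\infty$. That edge is strictly smaller than $|w|-1$:
\begin{itemize}
\item as $|w|\downarrow 1$ it is of order $(|w|-1)^{3/2}$;
\item as $|w|\to\infty$ it equals $|w|-\sqrt2+O(|w|^{-1})$.
\end{itemize}
In particular, $\varepsilon/4$ is not a valid lower bound for small $\varepsilon$. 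Your argument only uses $\|R_n(w)\|\le c(\varepsilon)$ with probability tending to one, and that is all the cited lemma provides, so the proof stands once that sentence is removed.

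There is a similar slip in your closing paragraph. In the notation of \cite{brailovskaya2024universality}, $\sigma$ is of order one for this model. The parameters that must be small are $\sigma_*$, $v$ and $R$, with $R$ of order $\sqrt{\log n/K_n}$.
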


\begin{proof}
The first part of the lemma follows from Lemma 4.2 of \cite{hachem2026extreme}.
The second part follows from the resolvent identity,
\[
R_n(z_n)-R_n(z)=(z-z_n)R_n(z_n)R_n(z),
\]
and the first part of the lemma.
\end{proof}

We continue with the following Lemma.

\begin{lem}\label{lemnormofR}
Fix $|z|>1$ and recall the event $\mathcal{E}_{\text{bound},n}(z)$ from Lemma \ref{lemma_R_n_z_n}. Then for any sequence of deterministic sequence of vectors $w_n \in \C^n$ such that $\| w_n \| < C$  for some $C>0$ and for all $n$, it is true that
\[
  1_{\mathcal{E}_{\text{bound},n}(z)}\left|\|R_n(z)w_n\|^2 - \frac{\|w_n\|^2}{\sqrt{|z|^2-1}} \right|\ \ \xrightarrow{\Prob}  0 .
\]
\end{lem}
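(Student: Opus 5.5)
The plan is to prove the statement by the same normalization device used in the proof of Lemma \ref{isotropic}: reduce to the case where the deterministic vector is of the form already treated in \cite{hachem2026extreme}, then transfer by homogeneity. First, by passing to subsequences it suffices to assume $\|w_n\|\to \omega\in[0,C]$. If $\omega=0$, the claim is immediate: on $\mathcal{E}_{\text{bound},n}(z)$ we have $\|R_n(z)w_n\|^2\le c^2\|w_n\|^2\to 0$, and $\|w_n\|^2/\sqrt{|z|^2-1}\to 0$ as well.

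If $\omega>0$, set $\hat w_n=w_n/\|w_n\|$. Both quantities in the statement are $2$-homogeneous in $w_n$, so the difference equals
\[
\|w_n\|^2\Big(\|R_n(z)\hat w_n\|^2-\frac{1}{\sqrt{|z|^2-1}}\Big).
\]
Since $\|w_n\|^2\le C^2$, it is enough to treat unit vectors. For a unit deterministic $\hat w_n$, I would invoke the quadratic-form estimate for $\langle R_n(z)^*R_n(z)\hat w_n,\hat w_n\rangle$ established in \cite{hachem2026extreme} in the course of proving Theorem \ref{thm_eigenvector_r=1}, where the normalization of the outlier eigenvector $R_n(\lambda)u_n/\|R_n(\lambda)u_n\|$ is computed. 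That argument takes a deterministic vector of bounded norm, and there $u_n/\|u_n\|$ plays the role of $\hat w_n$. It also works on the event where $\|R_n(z)\|$ is bounded, which is exactly our $\mathcal{E}_{\text{bound},n}(z)$ by Lemma \ref{lemma_R_n_z_n}.

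If one prefers a self-contained route, I would proceed in three steps. First, on $\mathcal{E}_{\text{bound},n}(z)$, expand $R_n(z)=-\sum_{k\ge 0} z^{-k-1}X_n^k$ and truncate at a level $L$. The tail is controlled by the bound $\|X_n\|\le 1+o(1)$ in the regime of Assumption \ref{ass:K}, which comes from the universality results of \cite{brailovskaya2024universality}. Second, the finite sum $\sum_{k,k'\le L}\bar z^{-k-1}z^{-k'-1}\langle X_n^{k'}\hat w_n,X_n^{k}\hat w_n\rangle$ reduces to the moments $\langle (X_n^{k})^*X_n^{k'}\hat w_n,\hat w_n\rangle$. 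Off-diagonal terms $k\neq k'$ are $o_{\Prob}(1)$ by the same isotropic argument as in Lemma \ref{isotropic}, and the diagonal terms concentrate around their expectations via a second-moment computation on the sparse entries. Third, sum the resulting deterministic series in $|z|^{-2}$ and compare it with the constant $1/\sqrt{|z|^2-1}$ of the statement.

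The main obstacle is this identification of the limiting constant. In the first route this means checking that the cited estimate applies verbatim to arbitrary bounded deterministic vectors, rather than only to $u_n$ with $|\langle u_n,v_n\rangle|>1$; the same rescaling trick as in Lemma \ref{isotropic} handles this. In the second route it means doing the combinatorial computation of the diagonal moments $\E\langle (X_n^{k})^*X_n^{k}\hat w_n,\hat w_n\rangle$ uniformly in $k\le L$ under the sparsity $K_n\gg\log^9 n$. There the sub-gaussian tails of Assumption \ref{ass:K} are used to discard the paths with repeated edges.
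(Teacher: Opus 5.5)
Your first route is the paper's proof. The paper sets aside the trivial case $w_n=0$ and cites Lemma 4.6 of \cite{hachem2026extreme}. Your homogeneity reduction to unit vectors, with the case $\|w_n\|\to0$ handled via $\|R_n(z)\|\le c$, is a harmless and slightly more careful version of that step.

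\textbf{The constant.} The step you call the main obstacle, and leave open, cannot be completed: the constant in the statement is a misprint. Your own series shows this. With $\langle X_n^{k'}\hat w_n,X_n^{k}\hat w_n\rangle\to\delta_{kk'}$ in probability, the limit is $\sum_{k\ge0}|z|^{-2(k+1)}=1/(|z|^2-1)$. This equals $1/\sqrt{|z|^2-1}$ only at $|z|=\sqrt2$.

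The same constant $1/(|z|^2-1)$ is what the estimate of \cite{hachem2026extreme} that you and the paper both invoke must give, since Theorem \ref{thm_eigenvector_r=1} requires it. It is also the constant the paper uses downstream, in Corollary \ref{cor_bound_matrix_product_norm} and \eqref{R_N(lambd_n)to1/m^2-1}. There the limit $1-|\mu|^{-2}$ is the ratio of $|\mu|^{-2}$ from \eqref{numerator} to $(|\mu|^2-1)^{-1}$. The printed constant would instead give $\sqrt{|\mu|^2-1}/|\mu|^2$.

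So the lemma as printed is false; take $w_n=e_1$ and $z=2$. Your argument, like the paper's citation, proves only the corrected version with limit $\|w_n\|^2/(|z|^2-1)$. You should have done the one-line summation and fixed the target, rather than planning to "compare" with $1/\sqrt{|z|^2-1}$.

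\textbf{The self-contained route.} The claim $\|X_n\|\le 1+o(1)$ is false. The singular values of $X_n$ follow the quarter-circle law on $[0,2]$, so $\|X_n\|\to 2$; it is the spectral radius that tends to $1$. Hence $|z|^{-L}\|X_n\|^L$ does not control the tail when $1<|z|\le 2$. Use instead the exact identity
\[
R_n(z)=-\sum_{k=0}^{L-1}z^{-k-1}X_n^k+z^{-L}R_n(z)X_n^L .
\]
On $\mathcal{E}_{\text{bound},n}(z)$ the remainder applied to $w_n$ then has norm at most $c\,|z|^{-L}\|X_n^Lw_n\|$, and $\E\|X_n^Lw_n\|^2=\|w_n\|^2+o(1)$ for each fixed $L$. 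This needs no bound on $\|X_n\|$ or $\|X_n^L\|$. The only hard input is the bounded-resolvent event of Lemma \ref{lemma_R_n_z_n}. The rest is fixed-$L$ moment and variance computations, and these produce $1/(|z|^2-1)$.
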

\begin{proof}
    We may assume that $\| w^n\|>0$ for all $n$ , else the claim follows trivially. In this case the claim follows by Lemma 4.6 of \cite{hachem2026extreme}.
\end{proof}
We conclude this section with the following Corollary.
\begin{cor}\label{cor_bound_matrix_product_norm}
Fix $z\in \C: |z|>1$ and recall the event $\mathcal{E}_{\text{bound},n}(z)$ from Lemma \ref{R_n(z_n)-R_n(z)}.  Let $C^n_1$ and $C^n_2$ be two sequences of $k_1\times n$ and $k_2 \times n$ matrices respectively for some $k_1,k_2 \in \N$ such that
\begin{equation}\label{boundingC_1}
    \| C^n_1\|,\| C_2^n\| < C, \ \ \ \text{ for all } n.
\end{equation}
 for some $C>0$. Then 
 \begin{align}
    1_{\mathcal{E}_{\text{bound},n}(z)} \left\| (C^n_1)^* R^*_n(z) R_n(z) C^n_1 - \frac{1}{|z|^2-1}  (C^n_1)^* C^n_2 \right\| \xrightarrow{\Prob}  0 .
 \end{align}
\end{cor}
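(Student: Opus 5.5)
The plan is to reduce the matrix statement to finitely many scalar quadratic forms and then to deduce each bilinear form from the norm asymptotics of Lemma \ref{lemnormofR} by polarization. I read the statement in the form in which it is used later: the matrix to control is $C^n_1 R_n^*(z)R_n(z)(C^n_2)^*$, compared with $\frac{1}{|z|^2-1}C^n_1(C^n_2)^*$. This form is forced by the dimensions $k_1\times n$ and $k_2\times n$ of $C^n_1$ and $C^n_2$.

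\textbf{Step 1 (entrywise reduction).} On $\mathcal{E}_{\text{bound},n}(z)$, let $x_i^n$ be the conjugate transpose of the $i$-th row of $C^n_1$ and $y_j^n$ the $j$-th column of $(C^n_2)^*$. Then the $(i,j)$ entry of the matrix is $\langle R_n(z)y_j^n, R_n(z)x_i^n\rangle$, with the inner product linear in the first slot. The vectors $x_i^n, y_j^n$ are deterministic and, by \eqref{boundingC_1}, satisfy $\|x_i^n\|,\|y_j^n\|\le C$. Since $k_1,k_2$ are fixed, \eqref{J_ij} shows it suffices to prove, for each fixed pair $(i,j)$,
\[
1_{\mathcal{E}_{\text{bound},n}(z)}\Big|\langle R_n(z)y^n, R_n(z)x^n\rangle-\frac{\langle y^n,x^n\rangle}{|z|^2-1}\Big|\xrightarrow{\Prob}0 .
\]

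\textbf{Step 2 (polarization).} I will use the identity
\[
\langle a,b\rangle=\tfrac14\sum_{k=0}^{3} i^k\,\|a+i^k b\|^2,
\]
valid for the Hermitian inner product. Taking $a=R_n(z)y^n$ and $b=R_n(z)x^n$, linearity of $R_n(z)$ gives
\[
\langle R_n(z)y^n,R_n(z)x^n\rangle=\tfrac14\sum_{k=0}^3 i^k\,\|R_n(z)w_k^n\|^2,\qquad w_k^n:=y^n+i^kx^n .
\]
Each $w_k^n$ is deterministic with $\|w_k^n\|\le 2C$. Lemma \ref{lemnormofR} then yields $1_{\mathcal{E}_{\text{bound},n}(z)}\big|\|R_n(z)w_k^n\|^2-\kappa(z)\|w_k^n\|^2\big|\xrightarrow{\Prob}0$, where $\kappa(z)$ is the deterministic constant in that lemma. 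Because there are only four terms, the errors add up to $o_{\Prob}(1)$. Applying the polarization identity once more, now to the deterministic vectors, gives $\tfrac14\sum_k i^k\kappa(z)\|w_k^n\|^2=\kappa(z)\langle y^n,x^n\rangle$. This proves Step 1's claim with constant $\kappa(z)$.

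\textbf{Step 3 (the constant).} The only real subtlety is bookkeeping the constant $\kappa(z)$. The corollary uses $1/(|z|^2-1)$, which is the standard value of the limit of $\|(X_n-z)^{-1}w\|^2/\|w\|^2$ for i.i.d.\ matrices outside the unit disk. It is also the value required for the overlap $1-|\mu|^{-2}$ in Theorem \ref{main_thm}. The display in Lemma \ref{lemnormofR} shows $1/\sqrt{|z|^2-1}$, which I take to be a typo for $1/(|z|^2-1)$ as in Lemma 4.6 of \cite{hachem2026extreme}; this should be checked against that reference. The argument above does not depend on the value: whatever constant Lemma \ref{lemnormofR} provides passes unchanged through polarization. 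Everything else is routine, because the events, the bounds on $\|R_n(z)\|$ and the boundedness of the test vectors are all supplied by Lemmas \ref{lemma_R_n_z_n} and \ref{lemnormofR}.
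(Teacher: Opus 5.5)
Your proposal is correct and is essentially the paper's own argument. Polarization expresses each entry $\langle R_n(z)y,R_n(z)x\rangle$ through four squared norms $\|R_n(z)w\|^2$ with deterministic bounded $w$, Lemma \ref{lemnormofR} controls each of these, and the fixed-size entrywise bound \eqref{J_ij} then gives the operator-norm statement. Your reading of the statement as comparing $C^n_1R_n^*(z)R_n(z)(C^n_2)^*$ with $\frac{1}{|z|^2-1}C^n_1(C^n_2)^*$, and of the constant in Lemma \ref{lemnormofR} as $1/(|z|^2-1)$ (the Neumann-series value $\sum_{k\ge 0}|z|^{-2(k+1)}$), matches how the paper itself uses these results: its proof writes $\frac{1}{|z|^2-1}$ immediately after invoking that lemma.
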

\begin{proof}
    We start by noticing that by the polarization identity for any sequence of deterministic vectors $x_n, y_n\in \C^n$
    \begin{align}
      & 4 \langle x_n, R^*_n(z) R_n(z) y_n \rangle= 4 \langle R_n(z) x_n, R_n(z) y_n\rangle= \\& = \| R_n(z)(x_n+y_n)\|^2-\|R_n(z) (x_n-y_n) \|^2 +i \|R_n(z)(x_n+iy+n)\|^2-i  \|R_n(z)(x_n-iy_n)\|^2
    \end{align}
    In particular by Lemma \ref{lemnormofR} if $\|x_n\|,\|y_n\|<C$ for all $n$ then
\begin{align}\label{innerR_nx,y}
        \langle R_n(z) x_n, R_n(z) y_n\rangle = \frac{1}{|z|^2-1} \langle x_n, y_n \rangle+o_{\mathbb{P}}(1).
    \end{align}
    It remains to apply \eqref{innerR_nx,y} to each entry and bound the operator norm as in Corollary \ref{cor:Mn-converges-outside}.
\end{proof}
\section{Proof of Theorem \ref{main_thm}}
We start with the proof of Theorem \ref{main_thm}(a).

\begin{proof}[Proof of Theorem \ref{main_thm}(a)]
In what follows set $M_n(z):=V_n^*R_n(z)U_n$ for any $z$ which isn't an eigenvalue of $X_n$. Here $V_n$ and $U_n$ are as in Lemma \ref{lem:kernel-localization-from-approx}.  Moreover, without loss of generality, we will assume that the first $k_{\ell,n}$ diagonal entries of $\Lambda_n$ are equal to $\mu_n$ and that one has the decomposition $U_n=[U_{\mu_n,n},U_{\neq,n}]$ where $U_{\mu_n,n} \in \C^{n \times k_{\ell,n}}$ corresponds to the right eigenvectors of $E_n$ with eigenvalue $\mu_n$.

Denote by $\mathcal{E}_{\text{inv},n}(\lambda_{\ell,n})$ the event that $(X_n-\lambda_{\ell,n})$ is invertible.
By Corollary \ref{cor:unit-evec-rep} there is some non-zero vector $a_n\in\ker\!\big(I_r+M_n(\lambda_{\ell,n})\big)$ such that
\begin{equation}\label{1st_equation_projection}
1_{\mathcal{E}_{\text{inv},n}(\lambda_{\ell,n})}\|Q_{\ell,n}^*\tilde u_{\ell,n}\|^2
=1_{\mathcal{E}_{\text{inv},n}(\lambda_{\ell,n})}
\frac{\|Q_{\ell,n}^*R_n(\lambda_{\ell,n})U_na_n\|^2}{\|R_n(\lambda_{\ell,n})U_na_n\|^2}.
\end{equation}
We can also assume that $\|U_n a_n\|=1,$ since $U_n$ has rank $r$ and so $U_n a_n \neq 0$ and $a_n$ can be rescaled arbitrarily.

Moreover due to Assumptions \ref{ass:E} we may apply Lemma \ref{lem:kernel-localization-from-approx}
to conclude that we may decompose $a_n=(a_{\mu,n},a_{\neq,n})$ so that
\begin{equation}\label{a_neq_small}
\|a_{\neq,n}\|\leq C \Big\|M_n(\lambda_{\ell,n})+\frac{1}{\mu}\Lambda_n\Big\|\ \  \text{ on }\mathcal{E}_{\text{inv},n}( \lambda_{\ell,n})
\end{equation}
for some absolute constant $C>0$. Furthermore recall the event $\mathcal{E}_{\text{bound},n}(\mu)$ from Lemma \ref{lemma_R_n_z_n}. On this event, which is a subset of  $\mathcal{E}_{\text{inv},n}(\lambda_{\ell,n})$ it holds true that
\[
    1_{\mathcal{E}_{\text{bound},n}(\mu)} \Big\|M_n(\lambda_{\ell,n})+\frac{1}{\mu}\Lambda_n\Big\| \leq  1_{\mathcal{E}_{\text{bound},n}(\mu)} \| V_n\| \| U_n \|\, \| R_n(\lambda_{\ell,n})-R_n(\mu)\|+   1_{\mathcal{E}_{\text{bound},n}(\mu)}  \Big\|M_n(\mu)+\frac{1}{\mu}\Lambda_n\Big\|.
\]

In particular by Lemma \ref{lemma_R_n_z_n} and Corollary \ref{cor:Mn-converges-outside} we get that
\begin{equation}\label{M_n()+1/mu L_n}
1_{\mathcal{E}_{\text{bound},n}(\mu)}  \Big\|M_n(\lambda_{\ell,n})+\frac{1}{\mu}\Lambda_n\Big\|\xrightarrow{\Prob}0.
\end{equation}

Thus using \eqref{a_neq_small} and \eqref{M_n()+1/mu L_n} we conclude that if one sets the $r$-dimensional vector $\tilde{a}_{\mu,n}=(a_{\mu,n},0)$, i.e. the first $k_{\ell,n}$ coordinates of $\tilde{a}_{\mu,n}$ are equal to the coordinates of $a_{\mu,n}$ and the rest are equal to $0$, then 
\[
1_{\mathcal{E}_{\text{bound},n}(\mu)}  \|a_n-\tilde{a}_{\mu,n}\|=o_{\Prob}(1).
\]
In particular since we have assumed that $U_na_n$ is a unit vector
\[
     \|U_n \tilde{a}_{\mu,n} \|=  \|U_n a_n\|+o_{\mathbb{P}}(1). \ \ \  \text{ on } \mathcal{E}_{\text{bound},n}(\mu).
\]

Moreover one may apply Lemma \ref{lemma_R_n_z_n} to get that 
\begin{equation}\label{QRQ}
   1_{\mathcal{E}_{\text{bound},n}(\mu)}   \left| \|Q_{\ell,n}^*R_n(\lambda_{\ell,n})U_na_n\|^2 -\|Q_{\ell,n}^*R_n(\mu)U_n\tilde{a}_{\mu,n}\|^2 \right| \xrightarrow{\Prob}0.
\end{equation}

Furthermore if one sets $\hat w_n=U_n\tilde{a}_{\mu,n}$ we have that $\hat w_n\in F_{n}$. Indeed one has that $U_n \tilde{a}_{\mu_n}=  U_{\mu_n,n} a_{\mu,n}$ and so $$(E_n U_{\mu_n,n}- \mu U_{\mu_n,n})a_{\mu,n}=0.$$ In particular if one writes $c_n= Q^{*}_{\ell,n} \hat w_n$ then $c_n\in\C^{k_{\ell,n}}$, $\hat w_n=Q_{\ell,n}c_n$ and
\begin{equation}\label{norm_of_c_n}
  \|c_n\|=  \|\hat w_n\| \ \ \text{ on  }\mathcal{E}_{\text{bound},n}(\mu).
\end{equation}
We write
\[
Q_{\ell,n}^*R_n(\mu)\hat w_n = Q_{\ell,n}^*R_n(\mu)Q_{\ell,n}c_n = A_nc_n.
\]
where $A_n=Q_{\ell,n}^*R_n(\mu)Q_{\ell,n}.$

It remains to notice that by Corollary \ref{cor:Mn-converges-outside} 
\begin{equation}\label{A_nc_n}
 1_{\mathcal{E}_{\text{bound},n}(\mu)}  \frac{1}{\|c_n\|^2}\left\| A_nc_n - \frac{c_n}{\mu}  \right\| =1_{\mathcal{E}_{\text{bound},n} 
(\mu)}  \frac{1}{\|c_n\|^2}    \left\| A_nc_n - \frac{1}{\mu} Q_{\ell,n}^*Q_{\ell,n}c_n  \right\| \xrightarrow{\Prob} 0.
\end{equation}
By combining \eqref{A_nc_n}, \eqref{norm_of_c_n} and \eqref{QRQ} we get that 
\begin{equation}\label{numerator}
 1_{\mathcal{E}_{\text{bound},n}(\mu)}   \frac{1}{\|U_n a_n \|^2}\|Q_{\ell,n}^*R_n(\lambda_{\ell,n})U_na_n\|^2 \xrightarrow{\Prob} \frac{1}{|\mu|^2}. 
\end{equation}
Moreover by Corollary \ref{cor_bound_matrix_product_norm} and \eqref{R_n(z_n)-R_n(z)} we get that
\begin{equation}\label{R_N(lambd_n)to1/m^2-1}
1_{\mathcal{E}_{\text{bound},n}(\mu)}  \frac{\|R_n(\lambda_{\ell,n})U_na_n\|^2}{\|U_n a_n\|^2} \xrightarrow{\Prob} \frac{1}{|\mu|^2-1}.
\end{equation}
The proof completes by combining \eqref{numerator} and \eqref{R_N(lambd_n)to1/m^2-1}.
\end{proof}

Next we prove Theorem \ref{main_thm}(b).

\begin{proof}[Proof of Theorem \ref{main_thm}(b)]
We start by using Lemma \ref{lem:ker-evec-bijection} to get that,
\[
 1_{\mathcal{E}_{\text{bound},n}(\mu)}  Q_{\nu,n}^*\tilde u_{\ell,n}
=
 1_{\mathcal{E}_{\text{bound},n}(\mu)}  \frac{Q_{\ell',n}^*R_n(\lambda_{\ell,n})U_na_n}{\|R_n(\lambda_{\ell,n})U_na_n\|}.
\]
As in the proof of Theorem \ref{main_thm}(a) one may conclude that
\[
 1_{\mathcal{E}_{\text{bound},n}(\mu)}  \|Q_{\ell',n}^*\tilde u_{\ell,n}\|^2
= 1_{\mathcal{E}_{\text{bound},n}(\mu)}  
\frac{(|\mu|^2-1)}{\|c_n\|^2}\,\|Q_{\ell',n}^*R_n(\mu)Q_{\ell,n}c_n\|^2 + o_{\Prob}(1),
\]
for some sequence $c_n \in \C^{k_{\ell,n}}$ such that $ \|c_n\|=\|U_na_n\|^2 + o_{\mathbb{P}}(1)$.
It remains to notice that due Lemma \ref{isotropic}
\[
1_{\mathcal{E}_{\text{bound},n}(\mu)}  
\Big|\ \|Q_{\ell',n}^*R_n(\mu)Q_{\ell,n}c_n\|^2-\frac{1}{|\mu|}\,\|Q_{\ell',n}^*Q_{\ell,n}c_n\| \ \Big|
=o_{\Prob}(1).
\]
\end{proof}

\bibliographystyle{alpha}

\bibliography{references}

\newcommand{\etalchar}[1]{$^{#1}$}
\begin{thebibliography}{ABC{\etalchar{+}}24}

\bibitem[ABC{\etalchar{+}}24]{akj-etal-24}
I.~Akjouj, M.~Barbier, M.~Clenet, W.~Hachem, M.~Ma\"ida, F.~Massol, J.~Najim, and V.~C. Tran.
\newblock Complex systems in ecology: a guided tour with large lotka--volterra models and random matrices.
\newblock {\em Proceedings of the Royal Society A}, 480(2285):20230284, 2024.

\bibitem[BBAP05]{baik2005phase}
J.~Baik, G.~Ben~Arous, and S.~P{\'e}ch{\'e}.
\newblock Phase transition of the largest eigenvalue for nonnull complex sample covariance matrices.
\newblock {\em The Annals of Probability}, 33(5):1643--1697, 2005.

\bibitem[BC16]{bordenave2016outlier}
C.~Bordenave and M.~Capitaine.
\newblock Outlier eigenvalues for deformed iid random matrices.
\newblock {\em Communications on Pure and Applied Mathematics}, 69(11):2131--2194, 2016.

\bibitem[BCGZ22]{bordenave2022convergence}
C.~Bordenave, D.~Chafa{\"\i}, and D.~Garc{\'\i}a-Zelada.
\newblock Convergence of the spectral radius of a random matrix through its characteristic polynomial.
\newblock {\em Probability Theory and Related Fields}, pages 1--19, 2022.

\bibitem[BGN11]{benaych2011eigenvalues}
F.~Benaych-Georges and R.~R. Nadakuditi.
\newblock The eigenvalues and eigenvectors of finite, low rank perturbations of large random matrices.
\newblock {\em Advances in Mathematics}, 227(1):494--521, 2011.

\bibitem[BS06]{baik2006eigenvalues}
J.~Baik and J.~W. Silverstein.
\newblock Eigenvalues of large sample covariance matrices of spiked population models.
\newblock {\em Journal of multivariate analysis}, 97(6):1382--1408, 2006.

\bibitem[Bun17]{bunin2017ecological}
G.~Bunin.
\newblock Ecological communities with lotka-volterra dynamics.
\newblock {\em Physical Review E}, 95(4):042414, 2017.

\bibitem[BvH24]{brailovskaya2024universality}
T.~Brailovskaya and R.~van Handel.
\newblock Universality and sharp matrix concentration inequalities.
\newblock {\em Geometric and Functional Analysis}, 34(6):1734--1838, 2024.

\bibitem[CCF09]{capitaine2009largest}
M.~Capitaine, Donati-Martin C., and D.~F{\'e}ral.
\newblock {The largest eigenvalues of finite rank deformation of large Wigner matrices: Convergence and nonuniversality of the fluctuations}.
\newblock {\em The Annals of Probability}, 37(1):1 -- 47, 2009.

\bibitem[CLZ23]{cos-lam-yiz-24}
S.~Coste, G.~Lambert, and Y.~Zhu.
\newblock The characteristic polynomial of sums of random permutations and regular digraphs.
\newblock {\em International Mathematics Research Notices}, 2024(3):2461--2510, 2023.

\bibitem[Cos23]{coste2023sparse}
S.~Coste.
\newblock Sparse matrices: convergence of the characteristic polynomial seen from infinity.
\newblock {\em Electronic Journal of Probability}, 28:1--40, 2023.

\bibitem[HL26]{hachem2026spectral}
Walid Hachem and Michail Louvaris.
\newblock On the spectral radius and the characteristic polynomial of a random matrix with independent elements and a variance profile.
\newblock {\em The Annals of Applied Probability}, 2026.

\bibitem[HLN26]{hachem2026extreme}
Walid Hachem, Michail Louvaris, and Jamal Najim.
\newblock Extreme eigenvalues and eigenvectors for finite rank additive deformations of non-hermitian sparse random matrices.
\newblock {\em arXiv preprint arXiv:2602.20956}, 2026.

\bibitem[Pau07]{paul2007asymptotics}
D.~Paul.
\newblock Asymptotics of sample eigenstructure for a large dimensional spiked covariance model.
\newblock {\em Statistica Sinica}, pages 1617--1642, 2007.

\bibitem[SCS88]{som-cri-som-88}
H.~Sompolinsky, A.~Crisanti, and H.~J. Sommers.
\newblock Chaos in random neural networks.
\newblock {\em Phys. Rev. Lett.}, 61:259--262, Jul 1988.

\bibitem[Tao13]{tao2013outliers}
T.~Tao.
\newblock Outliers in the spectrum of iid matrices with bounded rank perturbations.
\newblock {\em Probability Theory and Related Fields}, 155(1):231--263, 2013.

\bibitem[WT13]{wai-tou-13}
G.~Wainrib and J.~Touboul.
\newblock Topological and dynamical complexity of random neural networks.
\newblock {\em Phys. Rev. Lett.}, 110:118101, Mar 2013.

\end{thebibliography}
\addcontentsline{toc}{chapter}{bibliography}

\end{document}